\documentclass[12pt]{amsart}
\usepackage{amsmath,amssymb,bm,enumerate,graphicx,multido,pst-3dplot,pstricks,pst-node,caption,subcaption}
\usepackage[mathscr]{euscript}

\newtheorem{theorem}{Theorem}[section]

\newtheorem{lemma}[theorem]{Lemma}
\newtheorem{proposition}[theorem]{Proposition}
\newtheorem{conjecture}[theorem]{Conjecture}

\theoremstyle{definition}
\newtheorem{example}[theorem]{Example}
\newtheorem{construction}[theorem]{Construction}
\newtheorem{remark}[theorem]{Remark}

\def\D{{\mathscr{D}}}
\def\PP{{\mathscr{P}}}

\def\B{{\mathscr{B}}}
\def\C{{\mathscr{C}}}
\def\I{\mathscr{I}}
\def\H{\mathscr{H}}
\def\Aut{{\mathrm{Aut}}}
\def\Sym{{\mathrm{Sym}}}

\def\AGL{{\mathrm{AGL}}}

\def\CC{\bm{\mathscr{C}}}
\def\Bdy{\partial}
\def\c{{\rm c}}

\def\Bdy{\partial}
\newcommand{\ff[1]}{F^{(#1)}}

\usepackage{anysize}
\marginsize{2cm}{2cm}{2cm}{2cm}

\title{Recursive constructions for block-transitive, poset-imprimitive two-designs}
\author{Carmen Amarra$^{\rm\lowercase{b}}$}
\author{Alice Devillers$^{\rm\lowercase{a}}$ }
\author{Cheryl E. Praeger$^{\rm \lowercase{a},\ast}$}
\address{$^{\rm\lowercase{a}}$Centre for the Mathematics of Symmetry and Computation, The University of Western Australia, 35 Stirling Highway, Crawley, Western Australia 6009, Australia}
\email{alice.devillers@uwa.edu.au, cheryl.praeger@uwa.edu.au}
\address{$^{\rm\lowercase{b}}$ Institute of Mathematics, University of the Philippines Diliman, C.P. Garcia Avenue, Quezon City 1101, Philippines}
\email{mcamarra@math.upd.edu.ph}
\address{$^{\ast}$Corresponding author}

\date{\today}

\begin{document}

% These observations are summarised below in Theorem \ref{thm:wr}.

% \begin{theorem} \label{thm:wr}
% With the notation above, suppose that the poset $(I,\preccurlyeq)$ is the disjoint union $I = \widetilde{I} \,\dot\cup\, H$ of two non-empty sets such that for all $i\in H$ and $j \in \widetilde{I}$, we have $i>j$. Then
% \begin{enumerate}[(a)]
%     \item $H$ is an ancestral subset of $I$, and $F$ leaves invariant the partition $\C_{H}$ of $\PP$ with classes as in \eqref{class} (with $J = H$), and $F^{\C_{H}}$ is permutationally isomorphic to the permutation group $\ff[2]$ on $\Delta_{H}$.
%     \item For $j \in \widetilde{I}$, the ancestral set $A(j) = A'(j) \cup H$, where $A'(j) = A(j) \cap \widetilde{I}$ is the ancestral set consisting of the points above $j$ in the sub-poset $(\widetilde{I},\preccurlyeq)$, and the  group induced on each class $C \in \C_{H}$ by its setwise stabiliser $F_C$ is permutationally isomorphic to $F_{\widetilde{I}}$ in its action on $\Delta_{\widetilde{I}}$.
%     \item Moreover $F \cong F_{\widetilde{I}} \wr \ff[2]$.
% \end{enumerate}
% \end{theorem}

\begin{abstract}

We give two general constructions for  $2$-designs, that can be used recursively, and interchangeably, to produce new infinite families of $2$-designs admitting block-transitive groups of automorphisms which preserve arbitrarily large posets of partitions of the point-set. The only arbitrarily large posets for which constructions were previously known are chains of arbitrary length. Using the constructions  we exhibit new infinite families of poset-imprimitive block-transitive $2$-designs corresponding to several different arbitrarily large posets, as well as constructions for most posets with four nodes.
%     The input design $\widetilde{\D}$ admits imprimitive point-partitions which form a partially ordered set $\widetilde{\I}$ under the relation of refinement, while those for the output design $\D$ form a partially ordered set $\I$ that can be obtained from $\widetilde{\I}$ as follows: in the first construction, by adding a single node $s$ to $\widetilde{\I}$ satisfying the condition that $i \prec s$ for all $i \in \widetilde{\I}$; and in the second construction, by adding two nodes $1$ and $2$ to $\widetilde{\I}$ satisfying the condition that $1 \not\preccurlyeq 2$, $2 \not\preccurlyeq 1$, and $i \prec 1$  and $i \prec 2$ for all $i \in \widetilde{\I}$. For each construction we give necessary and sufficient conditions in order for the output $\D$ to be a $2$-design. We describe new families of $2$-designs, together with their corresponding posets $\I$, that can be obtained by recursively applying one or both constructions.
% {\color{red} Alice: it seems strange that in the first construction the node is called $s$ but in the second they are called $1,2$. That combined with the fact that in examples they are usually not called 1,2 makes me think it is a bad idea to call them 1,2. } {\color{blue}--In an earlier version the two nodes were called $r$ and $s$ - would that work? $r$ is the replication number, I'd prefer not to use that. I used $s_1,s_2$ now.}
    \bigskip %\noindent
    \begin{center}
    \emph{Dedicated to Alan Camina on the occasion of his eightieth birthday.}
    \end{center}

    \bigskip\noindent
    \emph{Keywords:}\quad $2$-design; block-transitive; poset-imprimitive groups. 
    
    \bigskip\noindent
    \emph{MathSciNet codes:}\quad 05B05, 20B25, 05B25

    \bigskip\noindent
    This work forms part of the Australian Research Council Discovery Grant project DP200100080.
    
\end{abstract}

\maketitle

\section{Introduction}

We present several constructions of block-transitive $2$-designs to demonstrate new simple ways of `blowing up’ small $2$-designs to form larger ones that preserve certain additional structure on the point set. Our approach may be viewed as a generalisation of the Cameron—Praeger design construction in \cite{CP93} which in turn was inspired by the work of Delandtsheer and Doyen \cite{DD} first brought to the attention of Cameron and the third author by Alan Camina, to whom this paper is dedicated. 

The most general structure we have previously considered on the point set of a block-transitive $2$-design is  a poset of partitions   $(\CC,\preccurlyeq)$, and in \cite[Theorem 1.2]{SmallEx} we gave (theoretical) necessary and sufficient conditions on the parameters for an orbit of a $k$-subset of points under the full stabiliser of  $(\CC,\preccurlyeq)$ to be a $2$-design. In \cite{SmallEx} we also presented infinite families of $2$-designs corresponding to several small posets. However we did not show, for an  arbitrary poset, whether or not the conditions for a $2$-design could be satisfied. The aim of this paper is to present a number of general constructions, as well as several explicit infinite families of block-transitive $2$-designs corresponding to families of posets of unbounded size for which, to our knowledge, explicit design constructions were not previously known.  We note that the necessary and sufficient conditions given in \cite[Theorem 1.2]{SmallEx} could have been used to justify the validity of our constructions, and indeed these were used in our first analyses of the constructions. However we subsequently developed a more straightforward approach involving simpler and shorter proofs, and giving more insight into the structure of the designs. This simpler approach is presented in the paper. In Section~\ref{s:compare} we briefly discuss how our constructions fit within the general framework presented in \cite{SmallEx}.

To explain the context of these design constructions, we note that the Cameron—Praeger designs in \cite{CP93} are general constructions for a poset consisting of a single nontrivial point partition. Also general constructions for posets which are either a chain or an anti-chain can be found in \cite{multigrids, grids22, chainspaper, ftchainspaper}. Combined with  constructions presented in \cite[Theorem 1.3]{SmallEx} we have explicit constructions  for block-transitive $2$-designs corresponding to each poset $(\CC,\preccurlyeq)$ of size up to $3$, as well as the famous `N-poset' (with four nodes). The constructions in this paper are to our knowledge the first ones corresponding to posets of unbounded size apart from chains. Our strategy is to give two general design constructions, and then apply each recursively to produce explicit families of block-transitive $2$-designs for posets of unbounded size.

Our first   Construction~\ref{con:wr-comb} takes as input a $1$-design $\widetilde{\D} = (\widetilde{\PP},\widetilde{\B})$ and an $e$-element set $\Delta$ for some integer $e>1$, and produces a $1$-design $\D(\widetilde{\D},e)$ with point set $\widetilde{\PP}\times \Delta$. The construction can produce a $2$-design from a given $\widetilde{\D}$ if and only if $\widetilde{\D}$ itself is a $2$-$(\widetilde{v},\widetilde{k},\widetilde{\lambda})$ design such that the number of blocks divides $\widetilde{v}\cdot\widetilde{\lambda}$, and in this case there is a unique value of $e$ for which  $\D(\widetilde{\D},e)$ is a $2$-design (see Proposition~\ref{prop:con-wr-comb} and \eqref{eq:tformula}). For example, all symmetric $2$-designs $\widetilde{\D}$ have this special divisibility property, as do all the designs in \cite[Constructions 7.4, 7.1]{multigrids} and \cite[Examples 5.1--5.4]{SmallEx}, and several other infinite families of designs, see our discussion in Remark~\ref{r:con-wr-ex}. % {\color{red} remark should be adjacent to Example \ref{ex:chains}.}
If an input $2$-design $\widetilde{\D}$ for Construction~\ref{con:wr-comb} has  the divisibility property and admits a block-transitive automorphism group $G$ preserving a poset of partitions of the point set based on a poset $\widetilde{\I} = (\widetilde{I},\preccurlyeq)$, then the output design $\D(\widetilde{\D},e)$  admits the block-transitive poset-imprimitive group $G\wr S_e$ corresponding to the poset obtained from $\widetilde{\I} = (\widetilde{I},\preccurlyeq)$ by adding a single new point $s$ such that $i\preccurlyeq s$ for all $i\in \widetilde{I}$, as shown in Figure~\ref{fig:firstI}, see Remark~\ref{rem:poset-con:wr-comb}. For instance, beginning with a $2$-design corresponding to the grid poset $\widetilde{\I}$ consisting of two unrelated points, we obtain a $2$-design  corresponding to the inverted V-shaped poset, see Example~\ref{ex:V-inv}.

\begin{figure}
    \centering
    \includegraphics{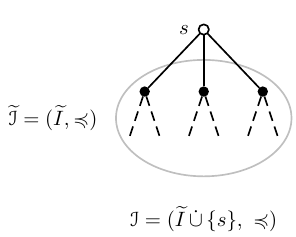}
    \caption{Poset after one application of Construction~\ref{con:wr-comb}}
    \label{fig:firstI}
\end{figure}

If the input design to Construction~\ref{con:wr-comb} is a poset-imprimitive $2$-design $\widetilde{\D}$ corresponding to a poset $\widetilde{\I} = (\widetilde{I},\preccurlyeq)$, and satisfies certain conditions on its parameters, then we may apply Construction~\ref{con:wr-comb} 
recursively $n$ times, as described in Construction~\ref{cons-iterated} and proved in Theorem~\ref{thm:addchainontop}, to produce a new block-transitive, poset-imprimitive  $2$-design corresponding to the poset obtained by adding a chain of length $n$ on top of $\widetilde{\I}$. Several families of such posets of unbounded size are shown in Figures~\ref{fig:invertedY} and~\ref{fig:smallposets}. In Examples~\ref{ex:invertedY} and~\ref{ex:smallposets} we obtain explicit families corresponding to these posets. This already achieves our stated aims, but we offer in addition a second design-construction strategy.
% {\color{blue}
% ...}        

Our second   Construction~\ref{con:rect-comb} also takes as input a $1$-design $\widetilde{\D} = (\widetilde{\PP},\widetilde{\B})$, but this time the second ingredient is a Cartesian product $\Delta=\Delta_1\times \Delta_2$ with each $|\Delta_i|=e_i\geq2$ such that $e_1-1$ divides $e_2-1$.  The output is again a $1$-design $\D(\widetilde{\D},(e_1,e_2))$, and is shown in   Proposition~\ref{prop:con-rect-comb} to be a $2$-design if and only if $\widetilde{\D}$ itself is a $2$-design and the conditions on parameters in \eqref{hyp:con-rect} hold. Again we construct new block-transitive poset-imprimitive $2$-designs for unboundedly large posets, namely the $Y$-shaped posets in Figure~\ref{ex:Y}, see Example~\ref{ex:Y}. In addition Construction~\ref{con:rect-comb} yields, in Example~\ref{ex:smallposets-2nodesontop}, explicit infinite families of $2$-designs corresponding to each of the small posets in Figure~\ref{fig:smallposets-2nodesontop}, using as input design $\widetilde{\D}$ some of the $2$-designs constructed in \cite{multigrids} and \cite{SmallEx}.

In the final Section~\ref{s:other} we %celebrate 
capitalise on the flexibility  available to us in applying Constructions~\ref{con:wr-comb} and~\ref{con:rect-comb} recursively, in any order, and obtain new block-transitive 2-designs with groups preserving poset block structures for a large variety of posets, including those shown in Figure~\ref{fig:towers}. Taking into account these new constructions, together with previous work by the authors and some upcoming work by Dacaymat \cite{simplecond}, we now have explicit constructions for infinite families of block-transitive $2$-designs corresponding to all but five of the posets with four-nodes, namely the posets shown in Figure~\ref{fig:4nodes-3}. We discuss the available known constructions for small posets in Example~\ref{ex:4nodes}, and pose an open problem.

% \bigskip

% In this paper, we give  methods for ``extending" a block-transitive, $\widetilde{\I}$-imprimitive $2$-design $\widetilde{\D} = (\widetilde{\PP},\widetilde{B})$ based on a poset $\widetilde{\I} = (\widetilde{I},\preccurlyeq)$ to a block-transitive, poset-imprimitive $2$-design $\D = (\PP,\B)$ based on a poset $(I,\preccurlyeq)$ which contains $(\widetilde{I},\preccurlyeq)$ as a sub-poset. 

\section{Preliminaries}

A \emph{block design} is an incidence structure $\D = (\PP,\B)$ consisting of a finite set $\PP$ of \emph{points} and a collection of point-subsets called \emph{blocks}, such that, for some constants $k\geq 2,r \geq 1$, every block has size $k$ and every point appears in exactly $r$ blocks. If the point set $\PP$ has size $v$, and if, in addition, for some constants $u,\lambda > 0$ any $u$-subset of points appears in $\lambda$ blocks, then the design is a $u$-$(v,k,\lambda)$ design. When $k=2$ a block design is simply a graph, and the only such 2-designs are the complete graphs.  Also, when $r=1$ the block set of a block design is simply a partition of the point set, and the only 2-designs with $r=1$ are designs consisting of one single block of size $v=k$. Such a design is called {\it trivial}. When every $k$-set is a block, the design is called {\it complete}; it is a $k$-design and hence a $2$-design. In particular, each trivial $2$-design is a complete design.  

By counting flags (incident point-block pairs)  in a block design, we find that:
    \begin{equation} \label{eq:vr=bk}
    vr= bk.
    \end{equation}
If the block design is a $2$-$(v,k,\lambda)$ design then, counting the number of flags $(p,B)$ such that $p_0\in B$ and $p\neq p_0$, where $p_0$ is any given point, we see that
    \begin{equation} \label{eq:r(k-1)}
    (v-1)\lambda=r(k-1),\quad \text{and hence by \eqref{eq:vr=bk} that}\quad v(v-1)\lambda=bk(k-1).
    \end{equation}
Our results in later sections use the parameter $t=\frac{k(k-1)}{v-1}$ for a $2$-design. Using \eqref{eq:r(k-1)}, we deduce that in a $2$-$(v,k,\lambda)$ design,       
    \begin{equation} \label{eq:tformula}
    t= \frac{k(k-1)}{v-1} = \frac{k\lambda}{r} = \frac{v\lambda}{b}.
    \end{equation}

Moreover, in our examples and discussion we often use notation from \cite{SmallEx}.

\section{Comparison of our new approach with that in \cite{SmallEx}} \label{s:compare}

To explain our approach to design construction and analysis used in \cite{SmallEx} we need to introduce the notion of an ancestral subset of a poset (in Subsection~\ref{ss:ancestral}), and of a poset block structure and generalised wreath products preserving such structures (in Subsection~\ref{ss:posetblockstruc}). We then describe, and comment on, the criterion given in \cite{SmallEx} for a  generalised wreath product to be poset-imprimitive on points and block-transitive on a $2$-design based on a poset block structure (in Subsection~\ref{ss:poset-imp}).  

\subsection{Partially ordered sets and their ancestral subsets} \label{ss:ancestral}

Let $\I = (I,\preccurlyeq)$ be a partially ordered set, and for any two elements $i, j \in I$ write $i \prec j$ if $i \preccurlyeq j$ but $i \neq j$. We may visualise $\I$ using a node for each element of $I$, such that if $i \prec j$, then node $i$ is `below' node $j$ and $i$ and $j$ are connected by an edge. For example, $\I$ is a \emph{chain} if any two distinct elements are related, and $\I$ is an \emph{antichain} if no two distinct elements are related. A chain with $s$ elements is represented by a vertical line of $s$ nodes, while an antichain with $s$ elements is represented by a set of $s$ isolated nodes.

A subset $J \subseteq I$ is said to be \emph{ancestral} if for every $j \in J$ and any $i \succcurlyeq j$, we have $i \in J$. For example, the empty subset $\varnothing$ and the improper subset $I$ are both ancestral in any poset $\I$, and the union of two ancestral subsets is an ancestral subset. Moreover each $i \in I$ determines a special ancestral subset $A[i]$ defined as
    \begin{equation}
    A[i] := \left\{ j \in I \ \vline \ j \succcurlyeq i \right\},
    \end{equation}
and if $i \prec j$ then $A[j] \subset A[i]$. Clearly $i \in A[i]$, and each ancestral ancestral subset $J$ is the union of the sets $A[i]$ for  $i \in J$. Also  the \emph{border} $\partial J$ of an ancestral subset $J$ is the set of all maximal elements in the complement $J^{\c} = I \setminus J$ of $J$ in $I$, and  for any $S \subseteq \partial J$, the set $J \cup S$ is an ancestral subset.

For example, in a chain $(\{1, \ldots, s\}, \ \preccurlyeq)$ with $1 \prec \cdots \prec s$, the nonempty ancestral subsets are precisely the sets $A[i]$ for  $i \in I$, and $\partial A[i] = \{i-1\}$ for $i>1$. On the other hand, in an antichain, $A[i] = \{i\}$ for all $i \in I$, and $\partial J = J^{\c}$ for each ancestral subset $J$.

% {\color{red} Do we need the next notation?}
% Let $\mathscr{A}(\I)$ denote the family of ancestral subsets of $\I = (I,\preccurlyeq)$.

\subsection{Poset block structures and generalised wreath products} \label{ss:posetblockstruc}

Let $\I = (I,\preccurlyeq)$ be a finite partially ordered set. We obtain as follows a point set $\PP$ together with a set $\bm{\C}^*$ of partitions of $\PP$ that form a partially ordered set isomorphic to $\I$. We call the pair $(\PP, \bm{\C}^*)$ a \emph{poset block structure}.

\medskip\noindent
\emph{The point set $\PP$}:\quad 
For each $i \in I$ choose a set $\Delta_i$ of size $e_i:=|\Delta_i| > 1$, and set $\PP = \prod_{i \in I} \Delta_i$, so that $v:=|\PP|=\prod_{i\in I}e_i$. For each subset $J \subseteq I$, let $\Delta_J = \prod_{j \in J} \Delta_j$, with the convention that $\Delta_\varnothing$ is a singleton, and note that $\PP = \Delta_I$.

\medskip\noindent
\emph{The partition $\C_J$ corresponding to an ancestral subset  $J\subseteq I$}:\quad 
Let $\pi_J$ be the natural projection $\pi_J : \PP \rightarrow \Delta_J$, and define
    \begin{equation} \label{partition}
    \C_J = \big\{ C_{\bm{\nu}} \ | \ {\bm{\nu}} \in \Delta_J \big\} \quad 
    \text{where} \quad
    C_{\bm{\nu}} = \big\{ \bm{\delta} \in \PP \ | \ (\bm{\delta})\pi_J = \bm{\nu} \big\}.
    \end{equation}
Then $\C_J$ is a partition of $\PP$ with classes $C_{\bm{\nu}}$. In particular, $\C_I = \binom{\PP}{1}$, the partition into singletons, and (since $\Delta_\varnothing$ is a singleton) $\C_\varnothing = \{\PP\}$.

\medskip\noindent
\emph{The first partition poset  $\bm\C$}:\quad Let  $\bm\C$ be the set of partitions $\C_J$ for all ancestral subsets  $J\subseteq I$. 
For ancestral subsets such that $J \subseteq J'$,  the partition $\C_{J'}$ is a refinement of $\C_J$, that is to say, each class in $\C_{J'}$ is a subset of a class in $\C_J$. We use the symbol $\preccurlyeq$ also to denote refinement, that is, we write ``$\C_{J'}$ is a refinement of $\C_J$'' as $\C_{J'} \preccurlyeq \C_J$; and $\C_{J'} \prec \C_J$ if $\C_{J'}$ is a proper refinement of $\C_J$ (some class in $\C_{J'}$ is a proper subset of a class in $\C_J$). Then $\bm{\C}$ is a partially ordered set  under this relation $\preccurlyeq$.

% Let
%     \begin{equation*}
%     \bm{\C} = \big\{ \C_J \ \vline \ J\in \mathscr{A}(\I) %\subseteq I \ \text{is ancestral} 
%     \big\}.
%     \end{equation*} 
% The relation $\preccurlyeq$ is a partial order on the set of all $\PP$-partitions in $\bm{\C}$, and the pair $(\PP,\bm{\C})$ is called a \emph{poset block structure}.

\medskip\noindent
\emph{The partition sub-poset  $(\bm{\C}^*,\preccurlyeq)$ isomorphic to $\I$}:\quad 
%
% Observe that for any ancestral subsets $J$ and $J'$, the partition $\C_{J \cup J'}$ consists of all classes $C \cap C'$ where $C \in \C_J$ and $C' \in \C_{J'}$.
%
Define $\bm{\C}^* = \big\{ \C_{A[i]} \ \vline \ i \in I \big\}$, and recall that,  if $i \prec j$ then $A[i] \supset A[j]$, so $\C_{A[i]} \prec \C_{A[j]}$. Hence $(\bm{\C}^*, \preccurlyeq) \cong \I$, and $(\PP, \bm{\C}^*)$ is called a \emph{poset block structure}. In general $\bm{\C}^*$ may be a proper subset of $\bm{\C}$ (for examples see \cite[Section 2.2, especially Figure 2]{SmallEx}). Nevertheless 
for each ancestral subset $J$, that is, for each $\C_J\in\bm{\C}$, the partition $\C_J$ is the meet of the partitions $\C_{A[i]}\in \bm{\C}^*$ for $i\in J$ 
(see the discussion after display (10) in \cite[Section 2.2]{SmallEx}).

\medskip\noindent
\emph{Generalised wreath products associated with   $(\bm{\C}^*,\preccurlyeq)$}:\quad 
Let $(\PP, \bm{\C}^*)$ be a poset block structure with partition poset $(\bm{\C}^*,\preccurlyeq)$ isomorphic to the poset $\I=(I,\preccurlyeq)$, and $\PP = \prod_{i \in I} \Delta_i$ of size $v=\prod_{i\in I} e_i$ as above. The \emph{generalised wreath product} $F$ of subgroups $G_i\leq \Sym(\Delta_i)$, for $i\in I$, is denoted $F = \prod_{(I,\preccurlyeq)} (G_i,\Delta_i)$,  can be written as a product $F = \prod_{i \in I} G_i^{|\Delta_{A(i)}|}$, and has a faithful action on $\PP$, so $F$ may be viewed as a subgroup of $\Sym(\PP)$, see \cite[Section 3]{BPRS} and \cite[Section 2.3]{SmallEx} for a detailed discussion of this group and how it acts on $\PP$. Moreover $F$ leaves invariant each partition in $\CC^*$, and if  $G_i$ is transitive on $\Delta_i$ for each $i\in I$, then $F$ is transitive on $\PP$, see  \cite[Lemma 9]{BPRS}. 
The largest subgroup of $\Sym(\PP)$ that leaves invariant this poset block structure is $G= \prod_{(I,\preccurlyeq)} (\Sym(\Delta_i),\Delta_i)$ by \cite[Theorem B]{BPRS}. Moreover, $F$ has the same orbitals as $G$ (and hence the same orbits on pairs of points) if and only if $G_i$ is 2-transitive on $\Delta_i$ for each $i\in I$, by \cite[Theorem C]{BPRS}. 

% classes in  $\C_J$ are intersections $\bigcap_{j \in J} C_j$ of classes $C_j \in \C_{A[j]}$ for $j \in J$.

% the subset $\bm{\C}^*$ of $\bm{\C}$ by
%     \begin{equation*}
%     \bm{\C}^* = \big\{ \C_{A[i]} \ \vline \ i \in I \big\}.
%     \end{equation*}
% Recall that if $i \prec j$ then $A[i] \supset A[j]$, so $\C_{A[i]} \prec \C_{A[j]}$. Hence $(\bm{\C}^*, \preccurlyeq) \cong \I$. Note that $\bm{\C}^* \subseteq \bm{\C}$, but 

% in general $\bm{\C}^* \neq \bm{\C}$, that is, $\bm{\C}$ may contain partitions other than those in $\bm{\C}^*$. Since every ancestral subset $J$ is the union of subsets $A[i]$, every partition in $\bm{\C}$ can be obtained by taking intersections of classes of some partitions $\C_{A[i]}$. In particular, the classes in the partition $\C_J$ are the intersections $\bigcap_{j \in J} C_j$ of classes $C_j \in \C_{A[j]}$ for all $j \in J$.

% \subsection{Generalised wreath product} \label{ss:genwr}

% Let $\I = (I,\preccurlyeq)$ be a partially ordered set, and let $\PP = \prod_{i \in I} \Delta_i$ be a set of points with poset block structure $(\CC,\preccurlyeq)$, $\CC = \{ \C_J \ | \ J \in \A(\I) \}$, as described in Subsection \ref{ss:posetblockstruc}. For each $i$ let $G_i \leq \Sym(\Delta_i)$. The \emph{generalised wreath product} $F$ of the groups $G_i$ is {\color{blue} $F = \prod_{i \in I} G_i^{|\Delta_{A(i)}|}\leq\Sym(\PP)$, see \cite[Section 3]{BPRS} and \cite[Section 2.3]{SmallEx} for a detailed discussion of this group and how it acts on $\PP$. In particular, $F$ preserves each partition in $\CC$. }

\subsection{$\I$-imprimitive designs}\label{ss:poset-imp}

Let $\I=(I,\preccurlyeq)$ with $|I|\geq 2$, let $\PP = \prod_{i \in I} \Delta_i$  of size $v = \prod_{i \in I} e_i$, for each $i\in I$ let $G_i \leq \Sym(\Delta_i)$ acting $2$-transitively on $\Delta_i$, and  $F = \prod_{(I,\preccurlyeq)} (G_i,\Delta_i)$, (all as in Subsection~\ref{ss:posetblockstruc}). Also let $B \subset \PP$ of size $k$, and let $\B = B^F$ be the set of images of $B$ under elements of $F$. The main theorem \cite[Theorem 1.2]{SmallEx} gives precise conditions under which $\D = (\PP,\B)$ is a $2$-design; namely,  $\D = (\PP,\B)$ is a $2$-design if and only if, for each proper nonempty ancestral subset $J\subset I$, the following equation holds (using the terminology above).
    \begin{equation} \label{2des}
    \sum_{S \subseteq \Bdy{J}}(-1)^{|S|} \left(\sum_{\bm{\nu} \in \Delta_{J \cup S}}  \big|B \cap C_{\bm{\nu}}\big|^2\right) 
    = \frac{k(k-1)}{v-1} \left( \prod_{i \in \Bdy{J}} (e_i - 1) \right) \left( \prod_{j \in (J \cup \Bdy{J})^\c} e_j \right).
    \end{equation} 
This was the first necessary and sufficient criterion given for $2$-designs admitting a poset-imprimitive, block-transitive group of automorphisms, for a general finite poset $\I$. 
We observe that the number of equations to be checked is two less than the number of ancestral subsets of $\I$; and also that these equations involve an alternating sum. Recently an equivalent criterion was discovered  which removes the complication of alternating sums,  see \cite{simplecond}. 
Our aim in this paper has been to provide a much simpler, if less general, approach, by restricting to a proper (but still infinite) family of posets where simple recursive procedures could be developed which, in our view, makes the criteria much more intuitive and transparent.

\section{Construction: adding a single node on top of the poset} %: a combinatorial approach}

We begin with a very general construction (Construction~\ref{con:wr-comb}) of a $1$-design which can produce $2$-designs under appropriate conditions on the parameters (Proposition~\ref{prop:con-wr-comb}). We show that recursively applying Construction~\ref{con:wr-comb} is often possible to produce new families of block-transitive poset-imprimitive $2$-designs (Theorem~\ref{thm:addchainontop}). We give explicit examples of several infinite families of (new and old) $2$-designs arising from this approach.

\begin{construction} \label{con:wr-comb}
Let  $\widetilde{\D} = (\widetilde{\PP},\widetilde{\B})$ be a $1$-design where $\widetilde{v} := |\widetilde{\PP}|$ and each block has size $\widetilde{k}\geq 2$. The number of blocks of $\widetilde{\D}$ is denoted by $\widetilde{b}$, and the number of blocks through a point is denoted by $\widetilde{r}$. Let $e$ be an integer such that $e>1$, let $\Delta:=\{1,2,\ldots, e\}$, and define 
    \[ \PP = \widetilde{\PP}\times \Delta. \]
We define $\B = \bigcup_{i\in\Delta} \B_i$  as follows (blocks in the family $\B_i$ are depicted in Figure \ref{fig:con-wr-comb}): For each  $i \in \Delta$, a subset $B \subset \PP$ is in $\B_i$ if and only if
    % \[
    % \left\{ \begin{array}{l}
    % B \cap (\widetilde{\PP} \times \{i\}) = \widetilde{B} \times \{i\} \text{ for some } \widetilde{B} \in \widetilde{\B}, \ \text{and} \\
    % |B \cap (\widetilde{\PP} \times \{j\})| = 1 \text{ if $j \neq i$}.
    % \end{array}\right.
    % \]
    \begin{enumerate}
    \item[(a)]  $B \cap (\widetilde{\PP} \times \{i\}) = \widetilde{B} \times \{i\}$ for some $\widetilde{B} \in \widetilde{\B},$ and 
    \item[(b)] $|B \cap (\widetilde{\PP} \times \{j\})| = 1 \text{ if $j \neq i$}.$
    \end{enumerate}    
Let $\D(\widetilde{\D},e) := (\PP,\B)$.

    \begin{center}
    \begin{figure}
    \centering
    \includegraphics{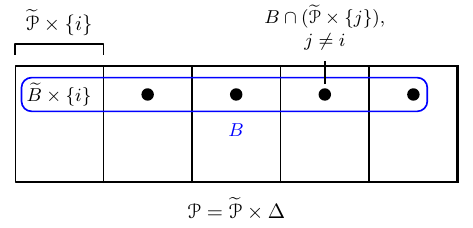}
    \caption{Block $B \in \B_i$ in Construction \ref{con:wr-comb}}
    \label{fig:con-wr-comb}
    \end{figure}
    \end{center}
\end{construction}

First we make some comments on the parameters and symmetry of the incidence structures produced by Construction~\ref{con:wr-comb}.

\begin{remark}\label{remark-combcons}
In Construction \ref{con:wr-comb}, we see that the set $\PP$ has size $v = \widetilde{v} \cdot e$, each $B \in \B$ has size $k := |B| = \widetilde{k} + e - 1 $, and the total number of blocks is $|\B| = e \cdot \widetilde{b} \cdot \widetilde{v}^{e-1}$.
We note that the design $\D(\widetilde{\D},e)$ is always a $1$-design since, using \eqref{eq:vr=bk} for  $\widetilde{\D}$, the number of blocks containing a point of $ \widetilde{\PP} \times \Delta$ is 
    \[
    r
    = \widetilde{r} \cdot \widetilde{v}^{e-1} + (e-1) \cdot \widetilde{b} \cdot \widetilde{v}^{e-2}
    = \widetilde{v}^{e-2} \cdot \widetilde{b}(\widetilde{k} + e - 1)
    = \widetilde{v}^{e-2} \cdot \widetilde{b} \cdot k
    \]
It is easy to see that $\Aut(\widetilde{\D}) \wr S_e \leqslant \Aut(\D(\widetilde{\D},e))$. Moreover, if $\widetilde{G} \leqslant \Aut(\widetilde{\D})$ is transitive on $\widetilde{\B}$, then $\widetilde{G} \wr S_e$ is transitive on $\B$. 
\end{remark}

\begin{proposition} \label{prop:con-wr-comb}
The point-block structure $\D(\widetilde{\D},e)$ in Construction \ref{con:wr-comb} is a $2$-design if and only if 
    \begin{equation}\label{hyp:con}
    \widetilde{\D} \text{ is a $2$-design}, \ t := \frac{\widetilde{k}(\widetilde{k}-1)}{{\widetilde{v}}-1} \text{ is an integer and } e = ({\widetilde{k}}-1)({\widetilde{k}}-2) + t.
    \end{equation}
% In that case, if $\widetilde{\D}$ is a $2$-$(\widetilde{v}, \widetilde{k}, \widetilde{\lambda})$ design, 
Moreover, if \eqref{hyp:con} holds and $\widetilde{\D}$ is a $2$-$(\widetilde{v},\widetilde{k},\widetilde{\lambda})$ design, then $\D(\widetilde{\D},e)$ is a $2$-$(v,k,\lambda)$ design with parameters 
    \begin{align*}
    v &= \widetilde{v}\cdot e=\widetilde{v}\cdot(({\widetilde{k}}-1)({\widetilde{k}}-2)+t)\\
    k &= \widetilde{k} + e - 1=(\widetilde{k}-1)^2+t\\
    \lambda &= \widetilde{\lambda} \cdot \widetilde{v}^{e - 1}
    \end{align*}
and
% Moreover, if \eqref{hyp:con} holds,
    \begin{equation} \label{e:kv} 
    \frac{k(k-1)}{v-1} = t.
    \end{equation}
\end{proposition}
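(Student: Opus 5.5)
Since $\D(\widetilde{\D},e)$ is always a $1$-design (Remark~\ref{remark-combcons}), it suffices to count, for each type of point pair, the number of blocks containing it, and to require these counts to agree. There are two types of pairs: pairs $\{(\alpha,i),(\beta,i)\}$ in the same layer $\widetilde{\PP}\times\{i\}$, and pairs $\{(\alpha,i),(\beta,j)\}$ with $i\neq j$.

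For a same-layer pair, a block $B\in\B_l$ with $l\neq i$ meets layer $i$ in only one point, so it cannot contain the pair. Hence only blocks in $\B_i$ count. Such a block contains the pair if and only if $\widetilde{B}\ni\alpha,\beta$, and the other $e-1$ layers are free. The count is therefore $\widetilde{\lambda}_{\alpha\beta}\,\widetilde{v}^{e-1}$, where $\widetilde{\lambda}_{\alpha\beta}$ is the number of blocks of $\widetilde{\D}$ containing $\alpha$ and $\beta$.

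For a cross pair with $i\neq j$, the blocks in $\B_i$ contribute $\widetilde{r}\,\widetilde{v}^{e-2}$, and likewise for $\B_j$. A block of $\B_l$ with $l\notin\{i,j\}$ must choose the singletons $\alpha,\beta$ in layers $i,j$, which gives $(e-2)\widetilde{b}\,\widetilde{v}^{e-3}$ blocks. So the cross count is $\widetilde{v}^{e-3}\bigl(2\widetilde{r}\widetilde{v}+(e-2)\widetilde{b}\bigr)$, independent of the pair.

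Thus $\D(\widetilde{\D},e)$ is a $2$-design if and only if two conditions hold. First, $\widetilde{\lambda}_{\alpha\beta}$ must be constant, i.e.\ $\widetilde{\D}$ is a $2$-design. Second, we need $\widetilde{\lambda}\widetilde{v}^2=2\widetilde{r}\widetilde{v}+(e-2)\widetilde{b}$. The remaining work, which is the main algebraic step, is to convert this second condition. Dividing by $\widetilde{b}$ and using \eqref{eq:tformula} for $\widetilde{\D}$ in the forms $\widetilde{v}\widetilde{\lambda}/\widetilde{b}=t$ and $\widetilde{r}\widetilde{v}/\widetilde{b}=\widetilde{v}\widetilde{\lambda}/t\cdot(\widetilde{v}/\widetilde{b})$ is messy. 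Instead I would use $\widetilde{r}/\widetilde{b}=\widetilde{k}/\widetilde{v}$ from \eqref{eq:vr=bk}, which gives
\[ t\,\widetilde{v} = 2\widetilde{k} + e - 2, \qquad\text{that is,}\qquad e = t\widetilde{v}-2\widetilde{k}+2. \]
Substituting $t(\widetilde{v}-1)=\widetilde{k}(\widetilde{k}-1)$ gives $e=t+\widetilde{k}^2-3\widetilde{k}+2=t+(\widetilde{k}-1)(\widetilde{k}-2)$. Since $e$ is an integer, this forces $t$ to be an integer, and conversely. This establishes the equivalence with \eqref{hyp:con}.

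For the parameters, $v=\widetilde{v}e$ and $k=\widetilde{k}+e-1=(\widetilde{k}-1)^2+t$ follow directly, and $\lambda=\widetilde{\lambda}\widetilde{v}^{e-1}$ is the same-layer count. Finally, \eqref{e:kv} follows from \eqref{eq:tformula} applied to $\D$: $k\lambda/r = k\widetilde{\lambda}\widetilde{v}^{e-1}/(\widetilde{v}^{e-2}\widetilde{b}k)=\widetilde{v}\widetilde{\lambda}/\widetilde{b}=t$, using the value of $r$ from Remark~\ref{remark-combcons}. No step is a serious obstacle; care is needed only in the cross-pair count, which must handle the cases $e=2$ (no $l\notin\{i,j\}$) and exponents $e-3<0$. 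To avoid negative exponents, I would state the cross count as $2\widetilde{r}\widetilde{v}^{e-2}+(e-2)\widetilde{b}\widetilde{v}^{e-3}$, where the second term is absent when $e=2$, and multiply through before comparing.
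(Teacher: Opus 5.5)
Your proof is correct. For the equivalence it is the paper's argument: the same two kinds of point pairs, the same counts $\widetilde{\lambda}(p,q)\,\widetilde{v}^{e-1}$ and $\widetilde{b}\,\widetilde{v}^{e-3}(2\widetilde{k}+e-2)$, the same condition $\widetilde{b}(2\widetilde{k}+e-2)=\widetilde{\lambda}\,\widetilde{v}^{2}$, and the same rearrangement to $e=t\widetilde{v}-2\widetilde{k}+2=(\widetilde{k}-1)(\widetilde{k}-2)+t$.

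The genuine difference is in proving \eqref{e:kv}. The paper expands $k(k-1)$ and $t(v-1)$ as polynomials in $\widetilde{k}$ and $t$ and checks that they coincide. You instead apply \eqref{eq:tformula} to the output design, $k(k-1)/(v-1)=k\lambda/r$. You then substitute $\lambda=\widetilde{\lambda}\,\widetilde{v}^{e-1}$ and $r=\widetilde{v}^{e-2}\widetilde{b}k$ from Remark~\ref{remark-combcons} to get $\widetilde{v}\widetilde{\lambda}/\widetilde{b}=t$. Using $v\lambda/b$ with $b=e\,\widetilde{b}\,\widetilde{v}^{e-1}$ is quicker still.

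Your route is shorter and explains why $t$ is inherited: $t$ is the ratio $v\lambda/b$, which the construction leaves unchanged. It does require knowing that $\D(\widetilde{\D},e)$ is already a $2$-design, which you have at that stage. The paper's expansion instead shows \eqref{e:kv} is a pure parameter identity. It follows only from $t(\widetilde{v}-1)=\widetilde{k}(\widetilde{k}-1)$ and the formulas for $e$, $k$, $v$, with no design needed.

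One small slip: in the aside you discard, $\widetilde{v}\widetilde{\lambda}/t\cdot(\widetilde{v}/\widetilde{b})$ equals $\widetilde{v}$, not $\widetilde{r}\widetilde{v}/\widetilde{b}=\widetilde{k}$. Since you do not use it, no harm is done.
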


\begin{proof}
For two points $p,q \in \widetilde{\PP}$, let $\widetilde{\lambda}(p,q)$ be the number of blocks in $\widetilde{\B}$ containing $p$ and $q$.

Consider first two points $(p,i)$ and $(q,i)$ in the same set $\widetilde{\PP}\times\{i\}$. For $j \neq i$, blocks in $\B_j$ contain only one point in $\widetilde{\PP}\times\{i\}$. So all blocks containing $(p,i)$ and $(q,i)$ are in $\B_i$. The number of such blocks is 
    \begin{equation} \label{eq:sameclass}
    \widetilde{\lambda}(p,q) \cdot \widetilde{v}^{e-1}.  
    \end{equation}

Now we consider two points $(p,i)$ and $(q,j)$ where $i\neq j$. The number of blocks in $\B_i$ containing $(p,i)$ and $(q,j)$ is $\widetilde{r} \cdot \widetilde{v}^{e-2}$, which by \eqref{eq:vr=bk} is equal to $\widetilde{b}\cdot \widetilde{k} \cdot \widetilde{v}^{e-3}$.
The number of such blocks in $\B_j$ is the same. For $\ell \notin \{i,j\}$, the number of such blocks in $\B_\ell$ is $ \widetilde{b} \cdot \widetilde{v}^{e-3}$. Thus the total number of blocks in $\B$ containing $(p,i)$ and $(q,j)$ is 
    \begin{equation} \label{eq:diffclass}
    2  \widetilde{b}\cdot \widetilde{k} \cdot \widetilde{v}^{e-3} + (e-2) \cdot \widetilde{b} \cdot \widetilde{v}^{e-3}=\widetilde{b} \cdot \widetilde{v}^{e-3}\cdot(2\widetilde{k}+e-2)
    \end{equation}

Thus $\D(\widetilde{\D},e)$ is a $2$-design if and only if 
    \[\widetilde{b} \cdot \widetilde{v}^{e-3}\cdot(2\widetilde{k}+e-2)= \widetilde{\lambda}(p,q) \cdot \widetilde{v}^{e-1} \text{ for any } p,q \in \widetilde{\PP},\]
that is, if and only if 
    \begin{equation*} %\label{eq:2design-comb}
    \text{ $\widetilde{\D}$ is a $2$-$(\widetilde{v}, \widetilde{k}, \widetilde{\lambda})$ design  and }
   \widetilde{b}\cdot(2\widetilde{k}+e-2) = \widetilde{\lambda} \cdot \widetilde{v}^{2}.
    \end{equation*}

Thus $\D(\widetilde{\D},e)$ is a $2$-design if and only if 
    \begin{equation} \label{eq:2design-comb}
    \text{$\widetilde{\D}$ is a $2$-$(\widetilde{v}, \widetilde{k}, \widetilde{\lambda})$ design  and }  \widetilde{b}\cdot (2\widetilde{k} + e - 2) = \widetilde{\lambda} \cdot \widetilde{v}^2.
    \end{equation}
By \eqref{eq:r(k-1)} and \eqref{eq:vr=bk},
    \[ \widetilde{\lambda}
    = \frac{\widetilde{r} \cdot (\widetilde{k}-1)}{\widetilde{v}-1}
    = \frac{\widetilde{b} \cdot \widetilde{k} \cdot (\widetilde{k}-1)}{\widetilde{v} \cdot (\widetilde{v}-1)}. \]
Thus the identity in \eqref{eq:2design-comb} becomes 
    \[
    \frac{\widetilde{k} \cdot (\widetilde{k}-1)}{(\widetilde{v}-1)}\cdot\widetilde{v} = 2\widetilde{k} + e - 2
    \]
In other words, 
    \begin{align*} 
    e
    &= \frac{\widetilde{k} \cdot (\widetilde{k}-1)}{(\widetilde{v}-1)} \cdot \widetilde{v} - 2\widetilde{k} + 2 \\
    &= \frac{\widetilde{k} \cdot (\widetilde{k}-1)}{(\widetilde{v}-1)} + \widetilde{k} \cdot (\widetilde{k}-1) - 2\widetilde{k} + 2 \\
    &= \frac{\widetilde{k} \cdot (\widetilde{k}-1)}{(\widetilde{v}-1)} + (\widetilde{k}-1) \cdot (\widetilde{k}-2)
    \end{align*}
Since $e$ must be an integer, $\D(\widetilde{\D},e)$ is a $2$-design if and only if $\widetilde{\D}$ is a $2$-design, $t = \frac{\widetilde{k} \cdot (\widetilde{k}-1)}{(\widetilde{v}-1)}$ is an integer and $e = t + (\widetilde{k}-1) \cdot (\widetilde{k}-2)$. Moreover, if these conditions hold, then $\lambda = \widetilde{\lambda} \cdot \widetilde{v}^{e-1}$ as computed in \eqref{eq:sameclass} and
    \[ k = \widetilde{k} + e - 1 = \widetilde{k} + t + (\widetilde{k}-1) \cdot (\widetilde{k}-2) - 1 = (\widetilde{k}-1)^2 + t.\]

Finally, assuming \eqref{hyp:con} holds, we compute
    \begin{align*}
    k(k-1)&=(({\widetilde{k}}-1)^2+t)(({\widetilde{k}}-1)^2+t-1)\\
    &=({\widetilde{k}}-1)^2\left( ({\widetilde{k}}-1)^2+2t-1 \right)+t^2-t\\
    &=({\widetilde{k}}-1)^2\left( \widetilde{k}^2-2{\widetilde{k}}+2t \right)+t^2-t
    \end{align*}
while
    \begin{align*}
    t(v-1)&=t(\widetilde{v} e-1)=t\left(e(\widetilde{v}-1)+e-1\right)\\
    &=e {\widetilde{k}}({\widetilde{k}}-1)+t(e-1)\\
    &={\widetilde{k}}({\widetilde{k}}-1)+(e-1)({\widetilde{k}}({\widetilde{k}}-1)+t)\\
    &={\widetilde{k}}({\widetilde{k}}-1)+(({\widetilde{k}}-1)({\widetilde{k}}-2)+t-1)({\widetilde{k}}({\widetilde{k}}-1)+t)\\
    &={\widetilde{k}}({\widetilde{k}}-1)+({\widetilde{k}}-1)\left({\widetilde{k}}({\widetilde{k}}-1)({\widetilde{k}}-2)+t({\widetilde{k}}-2)+(t-1){\widetilde{k}} \right)  +t^2-t\\
    &=({\widetilde{k}}-1)\left({\widetilde{k}}+{\widetilde{k}}({\widetilde{k}}-1)({\widetilde{k}}-2)+t{\widetilde{k}}-2t+t{\widetilde{k}}-{\widetilde{k}} \right)  +t^2-t\\
    &=({\widetilde{k}}-1)\left({\widetilde{k}}({\widetilde{k}}-1)({\widetilde{k}}-2)+2t({\widetilde{k}}-1) \right)  +t^2-t\\
    &=({\widetilde{k}}-1)^2 \left( {\widetilde{k}}({\widetilde{k}}-2) + 2t \right) + t^2 - t.
    \end{align*}
So the two values are equal.
 \end{proof}

We make a several remarks about Proposition~\ref{prop:con-wr-comb}. The first concerns the smallest designs produced by Construction~\ref{con:wr-comb}.

\begin{remark}
(a) We note that in Construction~\ref{con:wr-comb} the parameters satisfy $e\geq2$ and $\widetilde{k}\geq 2$, and in \eqref{hyp:con} the integer $t$ is always positive. The expression in \eqref{hyp:con} for $e$ then yields (since  $e\geq 2$) that either $\widetilde{k}\geq3$ or $t\geq2$. Moreover, if $\widetilde{k}\geq3$ then $e\geq 3$.

\medskip\noindent(b)  On the other hand if $\widetilde{k}=2$, then $e=t$ so we must have $t\geq 2$ and the expressions in \eqref{hyp:con} yield $(\widetilde{v}, \widetilde{k},t,e)=(2,2,2,2)$. These values correspond to a unique  $2$-design in Proposition~\ref{prop:con-wr-comb}, namely for $\widetilde{\D}$ the trivial $2$-$(2,2,1)$ design we obtain as $\D(\widetilde{\D},2)$ the complete  $2$-$(4,3,2)$ design. 

\medskip\noindent
(c) For the design  $\D(\widetilde{\D},2)$ described in (b), we have  $S_2 \wr S_2\leqslant \Aut(\D)$ by Remark~\ref{remark-combcons}. However,  the full automorphism group $\Aut(\D)$ is larger: namely  $\Aut(\D) = S_4$ (since $\D$ is a complete design) which is point-primitive. 

\medskip\noindent
(d) It seems to us unusual that the automorphism group of the $2$-design in part (b) is larger than the group  $\Aut(\widetilde{\D}) \wr S_{e}$ given by Remark~\ref{remark-combcons}.
Computations with Magma \cite{magma} on lots of small examples point to 
$\Aut(\D)=\Aut(\widetilde{\D})\wr S_{e}$ most of the time. The only  exceptions we found are the design $\D(\widetilde{\D},2)$ described in (b), and the design $\D(\widetilde{\D},2)$ where $\widetilde{\D}$ is a $C_4$-graph (in this case neither $\widetilde{\D}$ nor $\D(\widetilde{\D},2)$ is a $2$-design).
We wonder if these two examples are exceptional, and we believe:
\begin{conjecture}
     If $e\geq 3$ or $\widetilde{k}\geq 3$, then $\Aut(\D(\widetilde{\D}, e) )= \Aut(\widetilde{\D}) \wr S_{e}$. 
\end{conjecture}

\noindent
Note that, even for $e=\widetilde{k}=2$, the equality is true in many cases, according to our computations.
\end{remark}

Our next remark discusses the variety of input $2$-designs  $\widetilde{\D}$ which can yield  $2$-designs $\D$ in Construction~\ref{con:wr-comb}.

\begin{remark}\label{r:con-wr-ex}
% [$2$-designs with integer $t$ - response to comments after Ex. \ref{ex:chains}]
By Proposition~\ref{prop:con-wr-comb}, in order for   Construction~\ref{con:wr-comb} to produce a $2$-design we need, as the input design $\widetilde{\D}$, a $2$-$(\widetilde{v}, \widetilde{k}, \widetilde{\lambda})$ design  for which the parameter $t = \frac{\widetilde{k}(\widetilde{k}-1)}{\widetilde{v}-1}$ is an integer.
We list below several infinite families of designs with this property. Each of these can then be used as  input to Construction \ref{con:wr-comb} to produce a $2$-design $\D$ for the appropriate value of $e$.
    \begin{enumerate}[(a)]
        \item $\widetilde{\D}$ is a trivial design, that is, $\widetilde{k} = \widetilde{v}$ so that $t = \widetilde{k}$.
        \item $\widetilde{\D}$ is a complete design with $\widetilde{k} = \widetilde{v}-1$, so that $t = \widetilde{k}-1$.
        \item $\widetilde{\D}$ is a symmetric $2$-design, that is $\widetilde{v}  = \widetilde{b}$, so that $t = \widetilde{\lambda}$ by \eqref{eq:tformula} (for example, any of the $2$-transitive examples classified by Kantor~\cite{K}; these comprise the design of points and hyperplanes of  a Desarguesian projective space ${\rm PG}_d(q)$, the symplectic designs, and two exceptional examples, with $\widetilde{v}= 11$ or $176$).
        \item $\widetilde{\D}$ is a  $2$-design with $\widetilde{k}=p+1$ and  $\widetilde{v}=p^2+p+1$ for any positive integer $p\ge 2$, so that $t=1$. For instance, a projective plane or a complete design with those parameters work.  (Note that if $\widetilde{\lambda}=1$ then this is a symmetric design.) 
    \end{enumerate}
\end{remark}

The third remark presents a very general context for application of Construction~\ref{con:wr-comb}, namely the situation where the input design $\widetilde{\D}$ with the input group $\widetilde{G}$ of automorphisms already preserves a poset block structure on $\widetilde{\PP}$.

\begin{remark} \label{rem:poset-con:wr-comb}
For any subgroup $\widetilde{G} \leq \Aut(\widetilde{\D})$, the group $G = \widetilde{G} \wr S_e$ preserves the partition  $\{ \{\widetilde{\PP}\} \times \{\varepsilon\} \ | \ \varepsilon \in \Delta \}$ of $\PP = \widetilde{\PP} \times \Delta$. Suppose that the group $\widetilde{G}$ in Construction \ref{con:wr-comb} preserves a  poset block structure $(\widetilde{\PP}, \widetilde{\C}^*)$ that corresponds to a poset $\widetilde{\I} = (\widetilde{I},\preccurlyeq)$, as described in Subsection \ref{ss:posetblockstruc}. That is,  $\widetilde{\PP} = \Delta_{\widetilde{I}} = \prod_{i \,\in\, \widetilde{I}} \Delta_i$ and  $\widetilde{G}$ is a subgroup of the generalised wreath product $\prod_{(\widetilde{I},\preccurlyeq)} (\Sym(\Delta_i),\Delta_i)$. Then $\widetilde{G}$ preserves the partitions $\widetilde{\C}_{\widetilde{J}}$  of $\widetilde{\PP}$ for all ancestral subsets $\widetilde{J} \subseteq \widetilde{I}$. Each partition $\widetilde{\C}_{\widetilde{J}}$ then gives rise to a partition of the set $\PP$, which we denote by $\widetilde{\C}_{\widetilde{J}} \times \Delta$,  namely
    \[
    \widetilde{\C}_{\widetilde{J}} \times \Delta = \big\{ \widetilde{C} \times \{\varepsilon\} \ \big| \ \widetilde{C} \in \widetilde{\C}_{\widetilde{J}}, \ \varepsilon \in \Delta \big\},
    \]
and the group $G$ preserves each of these partitions. The partition $\widetilde{\C}_{\varnothing} \times \Delta = \{ \widetilde{\PP} \} \times \Delta$ is the `top' partition, that is, each class in any other partition $\widetilde{\C}_{\widetilde{J}} \times \Delta$ is contained in a unique class of $\{ \widetilde{\PP} \} \times \Delta$.  Inside each class  $ \widetilde{\PP} \times \{\epsilon\}$ of $\widetilde{\C}_{\varnothing} \times \Delta$ is a copy of the  poset block structure $(\widetilde{\PP}, \widetilde{\C}^*)$ 
% {\color{lightgray}$\widetilde{\I}$-imprimitivity system} 
corresponding to the poset $\widetilde{\I}$. It follows that $G$ preserves a poset of partitions corresponding to the poset $\I$  obtained by adding a single node `on top of' $\widetilde{\I}$, as illustrated on the left in Figure \ref{fig:poset-nodeontop}, and defined formally as follows:
    \begin{equation} \label{eq:nodeontop}
    \I = (I,\preccurlyeq) \ \ \text{where} \ I = \widetilde{I} \,\dot\cup\, \{s\}, \ \text{the restriction} \ \I|_{\widetilde{I}} = \widetilde{\I}, \ \text{and $i \preccurlyeq s$ for all $i \in \widetilde{I}$.}
    \end{equation}

    \begin{figure}
    \includegraphics[]{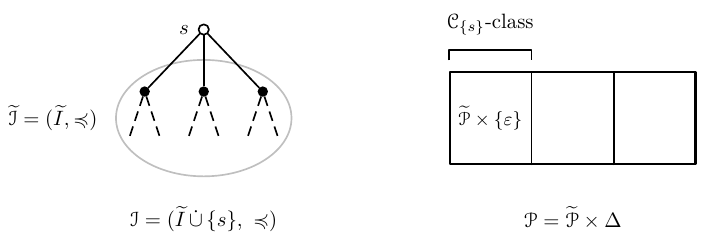}
    \caption{Poset $\I$ and partition $\C_{\{s\}}$ in Remark \ref{rem:poset-con:wr-comb}}
    \label{fig:poset-nodeontop}
    \end{figure}

If we denote the set $\Delta$ by $\Delta_s$ then $\PP = \widetilde{\PP} \times \Delta_s = \prod_{i \in I} \Delta_i$. The ancestral subsets in $\I$ are the sets $\widetilde{J} \cup \{s\}$, where $\widetilde{J}$ is ancestral in $\widetilde{\I}$. For each ancestral subset $\widetilde{J}$ in $\widetilde{\I}$, the partition $\widetilde{\C}_{\widetilde{J}} \times \Delta = \C_{\widetilde{J} \,\cup\, \{s\}}$, and in particular $\widetilde{\C}_{\varnothing} \times \Delta = \{\widetilde{\PP}\} \times \Delta = \C_{\{s\}}$, as illustrated on the right in Figure \ref{fig:poset-nodeontop}.
\end{remark}

The set-up described in Remark~\ref{rem:poset-con:wr-comb} may be regarded as a prototype for the recursive step of a procedure whereby we apply Construction~\ref{con:wr-comb} iteratively. Our next construction formalises such a procedure.

\begin{construction}\label{cons-iterated}
    Let $\widetilde{\D} = (\widetilde{\PP},\widetilde{\B})$ be a $1$-design. %$2$-$(\widetilde{v}, \widetilde{k}, \widetilde{\lambda})$ design.
    Let $e_1, e_2, \ldots, e_n \geq 2$ be integers.
We recursively define designs:
\[\D_0=\widetilde{\D} \quad \text{and \quad for }i=1,2,\ldots, n, \ \ \D_i=\D(\D_{i-1},e_i),\]
where $\D(\D_{i-1},e_i)$ is the design from Construction \ref{con:wr-comb}.
   We also write $\D(\widetilde{\D},e_1, e_2, \ldots, e_n)=\D_n$. 
\end{construction}

It follows from Remark \ref{remark-combcons} that $\D(\widetilde{\D},e_1, e_2, \ldots, e_n)$ is a $1$-design, and in the next theorem we determine precisely when $\D(\widetilde{\D},e_1, e_2, \ldots, e_n)$ is a $2$-design.

\begin{theorem} \label{thm:addchainontop}
Let $\widetilde{\D} = (\widetilde{\PP},\widetilde{\B})$ be a $1$-design and  let $e_1, e_2, \ldots, e_n \geq 2$ be integers. Let $\D_n=\D(\widetilde{\D},e_1, e_2, \ldots, e_n)$ described in Construction \ref{cons-iterated}.

Then $\D_n$ is a $2$-design if and only if 
    \begin{equation} \label{hyp:con-ext}
    \text{\parbox[t]{15cm}{\centering
    $\widetilde{\D}$ is a $2$-design, \ \  $t := \dfrac{\widetilde{k}(\widetilde{k}-1)}{{\widetilde{v}}-1}$ is an integer, \ \ and for $s=1,2,\ldots n,$ \\
    $e_s = \displaystyle\left(\widetilde{k} + \sum_{j=1}^{s-1} (e_j-1) - 1\right)\left(\widetilde{k} + \sum_{j=1}^{s-1} (e_j-1) - 2\right) + t$.}}
    \end{equation}
Moreover, if \eqref{hyp:con-ext} holds and $\widetilde{\D}_n$ is a $2$-$(\widetilde{v}, \widetilde{k}, \widetilde{\lambda})$ design, then $\D$ is a $2$-$(v,k,\lambda)$ design, where 
% \begin{align*}
%     v &= \widetilde{v}\cdot \prod_{j=1}^n e_j\\
%     k &= \widetilde{k}+\sum_{j=1}^{n}(e_j-1)\\
%     \lambda &= \widetilde{\lambda} \cdot \widetilde{v}^{k-\widetilde{k}} \cdot \prod_{1\leq j< i\leq n } e_j^{e_i-1}
% \end{align*}
\begin{equation*}
    v = \widetilde{v}\cdot \prod_{j=1}^n e_j,\quad 
    k = \widetilde{k}+\sum_{j=1}^{n}(e_j-1),\quad 
    \lambda = \widetilde{\lambda} \cdot \widetilde{v}^{k-\widetilde{k}} \cdot \prod_{1\leq j< i\leq n } e_j^{e_i-1}
\end{equation*}
and $k(k-1)/(v-1)=t$.
\end{theorem}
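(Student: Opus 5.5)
Induction on $n$, using Proposition~\ref{prop:con-wr-comb} as the inductive step; the key invariant is that the ratio $k(k-1)/(v-1)$ is preserved by each application. Write $v_s,k_s,\lambda_s$ for the parameters of $\D_s$, with $v_0=\widetilde{v}$, $k_0=\widetilde{k}$, $\lambda_0=\widetilde{\lambda}$. By Remark~\ref{remark-combcons}, for every $1$-design input we have $v_s=v_{s-1}e_s$ and $k_s=k_{s-1}+e_s-1$. Hence, independently of any design hypothesis,
\[ v_s=\widetilde{v}\prod_{j\le s}e_j \quad\text{and}\quad k_s=\widetilde{k}+\sum_{j\le s}(e_j-1). \]
In particular, $k_{s-1}=\widetilde{k}+\sum_{j=1}^{s-1}(e_j-1)$, so the condition on $e_s$ in \eqref{hyp:con-ext} reads $e_s=(k_{s-1}-1)(k_{s-1}-2)+t$.

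For the forward direction (``if''), I prove by induction on $s$ that $\D_s$ is a $2$-design with $k_s(k_s-1)/(v_s-1)=t$. The case $s=0$ is the hypothesis. For the inductive step, suppose $\D_{s-1}$ is a $2$-design whose ratio $t_{s-1}:=k_{s-1}(k_{s-1}-1)/(v_{s-1}-1)$ equals $t$, an integer. The hypothesis on $e_s$ is then exactly \eqref{hyp:con} for the input $\D_{s-1}$. Proposition~\ref{prop:con-wr-comb} therefore gives that $\D_s$ is a $2$-design, and \eqref{e:kv} gives that its ratio is again $t$.

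For the converse (``only if''), suppose $\D_n$ is a $2$-design. Applying Proposition~\ref{prop:con-wr-comb} to $\D_n=\D(\D_{n-1},e_n)$ shows that $\D_{n-1}$ is a $2$-design, that $t_{n-1}$ is an integer, and that $e_n=(k_{n-1}-1)(k_{n-1}-2)+t_{n-1}$. Descending in this way, every $\D_s$ for $s<n$ is a $2$-design with integer $t_s$, and $e_{s+1}=(k_s-1)(k_s-2)+t_s$. By \eqref{e:kv}, $t_{s+1}=t_s$ for each $s$, so all the $t_s$ equal $t_0=t$. This yields \eqref{hyp:con-ext}.

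It remains to compute $\lambda$. Proposition~\ref{prop:con-wr-comb} gives $\lambda_s=\lambda_{s-1}v_{s-1}^{e_s-1}$, so
\[ \lambda_n=\widetilde{\lambda}\prod_{i=1}^n v_{i-1}^{e_i-1} =\widetilde{\lambda}\prod_{i=1}^n\Bigl(\widetilde{v}\prod_{j<i}e_j\Bigr)^{e_i-1}. \]
Collecting the powers of $\widetilde{v}$ gives the exponent $\sum_i(e_i-1)=k-\widetilde{k}$, and the remaining factor is $\prod_{j<i}e_j^{e_i-1}$, as claimed.

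I expect no real obstacle. The only points needing care are the bookkeeping in the converse, where the invariance $t_{s+1}=t_s$ supplied by \eqref{e:kv} is what turns the integrality of each $t_s$ into the single integer $t$, and the regrouping of exponents in $\lambda$.
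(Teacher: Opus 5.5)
Your proof is correct and follows essentially the paper's argument: induction on the number of applications of Construction~\ref{con:wr-comb}, with Proposition~\ref{prop:con-wr-comb} as the inductive step and \eqref{e:kv} keeping the ratio $k_s(k_s-1)/(v_s-1)$ equal to $t$ at every stage. The differences are only cosmetic. You treat the two implications separately and unroll $\lambda_s=\lambda_{s-1}v_{s-1}^{e_s-1}$ into a single product, whereas the paper runs one induction on the biconditional and checks the $\lambda$ formula inductively.
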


\begin{proof}
We will show by induction that the statement holds for $\D_m$ as in Construction \ref{cons-iterated} for each $m=1,2,\ldots, n.$ We denote by $v_m$ and $k_m$ the number of points and the number of points per block, respectively, of $\D_m$. 

First note that the statement holds for $\D_1$ by Proposition \ref{prop:con-wr-comb} (with the standard conventions for empty sums and empty product).

Assume that the statement holds for $\D_{m-1}$ and consider $\D_m=\D(\D_{m-1},e_i)$. %Denote by 
By Proposition \ref{prop:con-wr-comb}, $\D_m$ is a 2-design if and only if 
$\D_{m-1}$  is a $2$-design, $t_m := \frac{k_{m-1}(k_{m-1}-1)}{v_{m-1}-1}$ is an integer, and $e_m = (k_{m-1}-1)(k_{m-1}-2)+t_m.$
Now by the induction hypothesis, $\D_{m-1}$  is a $2$-design if and only if 
$\widetilde{\D}$ is a $2$-design, $t := \frac{\widetilde{k}(\widetilde{k}-1)}{{\widetilde{v}}-1}$ is an integer and for $s=1,2,\ldots m-1,$ $e_{s} = \big(\widetilde{k}+\sum_{j=1}^{s-1}(e_j-1)-1\big)\big(\widetilde{k}+\sum_{j=1}^{s-1}(e_j-1)-2\big)+t.$
Moreover, in that case $t_{m}=t$ (last induction hypothesis) and $k_{m-1}=\widetilde{k}+\sum_{j=1}^{m-1}(e_j-1)$.
Putting all these conditions together, $\D_m$ is a 2-design if and only if 
$\widetilde{\D}$  is a $2$-design,  $t := \frac{\widetilde{k}(\widetilde{k}-1)}{{\widetilde{v}}-1}$ is an integer and for $s=1,2,\ldots m,$ $e_{s} = \big(\widetilde{k}+\sum_{j=1}^{s-1}(e_j-1)-1\big)\big(\widetilde{k}+\sum_{j=1}^{s-1}(e_j-1)-2\big)+t.$

Now we assume that $\widetilde{\D}$ is a $2$-$(\widetilde{v}, \widetilde{k}, \widetilde{\lambda})$ design. By the induction hypothesis, $\D_{m-1}$ is a $2$-$(v_{m-1},k_{m-1},\lambda_{m-1})$, with parameters as in the statement (for $n=m-1$).
By Proposition \ref{prop:con-wr-comb},
    \[v_m=v_{m-1} e_m= \widetilde{v}\cdot \prod_{j=1}^{m-1} e_j\cdot e_m=\widetilde{v}\cdot \prod_{j=1}^{m} e_j\]
and 
    \[k_m=k_{m-1}+e_m-1=\widetilde{k}+\sum_{j=1}^{m-1}(e_j-1)+e_m-1=\widetilde{k}+\sum_{j=1}^{m}(e_j-1).\]
Finally
    \begin{align*}
    \lambda_m
    &=\lambda_{m-1}\cdot v_{m-1}^{e_m-1}\\
    &=\widetilde{\lambda} \cdot \widetilde{v}^{k_{m-1}-\widetilde{k}} \cdot \prod_{1\leq j< i\leq m-1 } e_j^{e_i-1}\cdot \left(\widetilde{v}\cdot \prod_{j=1}^{m-1} e_j\right)^{e_m-1}\\
    &=\widetilde{\lambda} \cdot \widetilde{v}^{k_{m-1}+e_m-1-\widetilde{k}} \cdot \prod_{1\leq j< i\leq m-1 } e_j^{e_i-1}\cdot \left(\prod_{j=1}^{m-1} e_j\right)^{e_m-1}\\
    &=\widetilde{\lambda} \cdot \widetilde{v}^{k_m-\widetilde{k}} \cdot \prod_{1\leq j< i\leq m } e_j^{e_i-1}.
    \end{align*}
This concludes the proof.
\end{proof}

We now make a series of remarks and examples which show how to apply Construction~\ref{cons-iterated} to produce explicit families of $2$-designs. The first remark formally shows how Remark~\ref{rem:poset-con:wr-comb} can be applied recursively.

\begin{remark}\label{rem:addchainontop}
The design $\D(\widetilde{\D}, e_1, e_2, \ldots, e_n)$ in Construction \ref{cons-iterated} has point set $\PP = \widetilde{\PP} \times \Delta_1 \times \Delta_2 \times \cdots \times \Delta_n$, where $|\Delta_i| = e_i$ for each $i \in \{1, 2, \ldots, n\}$. Applying Remark \ref{rem:poset-con:wr-comb} iteratively we see that, for any subgroup $\widetilde{G} \leq \Aut(\widetilde{\D})$ that preserves a poset block structure $(\widetilde{\PP}, \widetilde{\C}^*)$ corresponding to a poset $\widetilde{\I} = (\widetilde{I},\preccurlyeq)$ as described in Subsection \ref{ss:posetblockstruc}, the iterated wreath product $G = \widetilde{G} \wr S_{e_1} \wr \ldots \wr S_{e_n}$ preserves a poset block structure on $\PP$ corresponding to a poset $\I$ defined as follows:

    \begin{equation} \label{e:chain-on-top}
    \I = (I,\preccurlyeq) \ \ \text{where} \ \ \text{\parbox[t]{12.5cm}{\raggedright $I = \widetilde{I} \,\dot\cup\, H$ for a chain $\mathscr{H} = (H,\preccurlyeq)$ with $n$ nodes; \ \ the restrictions \ $\I|_{\widetilde{I}} = \widetilde{\I}$ \ and \ $\I|_{H} = \H$;  \  \text{and} \ $j \preccurlyeq i$ \ \  $\forall\,j \in \widetilde{I}, \ i \in H$.}}
    \end{equation}
    
That is, $\I$ is the poset obtained by adding a chain $\mathscr{H}$ `on top of' $\I$, as shown in Figure \ref{fig:addchainontop}.
    \begin{figure}
    \includegraphics[height=5cm]{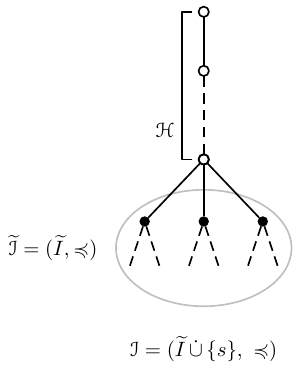}
    \caption{Poset $\I$ in Remark \ref{rem:addchainontop}}
    \label{fig:addchainontop}
    \end{figure}
\end{remark}

The next remark is a technical observation about the parameters in Theorem~\ref{thm:addchainontop}.

\begin{remark} \label{rem:firsttwoes}
Computing the first two values $e_1, e_2$ in Theorem \ref{thm:addchainontop} yields
\begin{align*}
    e_{1} &= (\widetilde{k}-1)(\widetilde{k}-2) + t=\widetilde{k}^2-3\widetilde{k}+t+2 \\
    e_{2} &=\left(\widetilde{k} + (e_1-1) - 1\right)\left(\widetilde{k} + (e_1-1) - 2\right) + t\\
   &=\left( \widetilde{k}^2-2\widetilde{k}+t \right)\left( \widetilde{k}^2-2\widetilde{k}+t-1\right) + t\\
    &= ((\widetilde{k}-1)^2+t-1)((\widetilde{k}-1)^2+t-2)+t \\
    &= (\widetilde{k}-1)^4 + (\widetilde{k}-1)^2 (2t-3) + (t-1)(t-2) + t \\
    &=(\widetilde{k}-1)^2((\widetilde{k}-1)^2+2t-3)+t^2-2t+2\\
    &=(\widetilde{k}-1)^2(\widetilde{k}^2-2\widetilde{k}+2t-2)+t^2-2t+2.
\end{align*}
% {\color{red}  are we still using this observation? {\color{magenta} CEP: This remark only referred to at end of proof of Prop~\ref{prop:con-rect-comb}, so think it's safe to remove the following.} I think we don't, so we should delete it.
% Observe that
%     \begin{align*}
%     (e_{1}-1)(\widetilde{k}(\widetilde{k}-1) + t - 1)
%     &= ((\widetilde{k}-1)(\widetilde{k}-2) + t - 1)(\widetilde{k}(\widetilde{k}-1) + t - 1) \\
%     &= (\widetilde{k}-1)^2 \widetilde{k}(\widetilde{k}-2) + (\widetilde{k}-1)(t-1)(2\widetilde{k}-2) + (t-1)^2 \\
%     &= (\widetilde{k}-1)^2 (\widetilde{k}(\widetilde{k}-2) + 2t - 2) + (t-1)^2 \\
%     &= e_{2}-1 
%     \end{align*}}
\end{remark}

For many of the known examples the parameters $e_i$ can be expressed quite compactly and uniformly and the parameter  
$t = 1$. We give one family of such parameters in Lemma~\ref{lem:chainontop-p+1}. 

\begin{lemma} \label{lem:chainontop-p+1}
% {\color{lightgray} Under the hypotheses of Theorem \ref{thm:addchainontop}, suppose that the input $1$-design $\widetilde{\D}$ has parameters $\widetilde{k} = p+1$ and $\widetilde{v} = p^2 + p + 1$, for some positive integer $p$, and let $H = \{1,2,\ldots n\}$ form a chain $1 \preccurlyeq 2 \preccurlyeq \ldots \preccurlyeq n$ with $n$ nodes.}
Under the hypotheses of Theorem \ref{thm:addchainontop}, suppose that the input  $2$-design $\widetilde{\D}$ has parameters $\widetilde{k} = p+1$ and $\widetilde{v} = p^2 + p + 1$, for some positive integer $p$, and let $\mathscr{H}=(H, \preccurlyeq)$ be a chain with $n$ nodes. 
Then the parameters of the $2$-design $\D_n=\D(\widetilde{\D},e_1, e_2, \ldots, e_n)$  arising from Construction~\ref{cons-iterated} and Theorem \ref{thm:addchainontop} are 
    \begin{align*}
    e_{i} &= p^{2^i} - p^{2^{i-1}} + 1 \text{ for all } i = 1,2,\ldots,n \\
    k &= p^{2^n} + 1 \\
    v &= p^{2^{n+1}} + p^{2^n} + 1.
    \end{align*}
\end{lemma}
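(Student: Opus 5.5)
Compute $t$ first. With $\widetilde{k}=p+1$ and $\widetilde{v}=p^2+p+1$ we have $t=\widetilde{k}(\widetilde{k}-1)/(\widetilde{v}-1)=(p+1)p/(p^2+p)=1$. This is an integer, so Theorem~\ref{thm:addchainontop} applies. The $e_s$ are then forced by \eqref{hyp:con-ext}, and $\D_n$ is a $2$-design with $k=\widetilde{k}+\sum_{j=1}^n(e_j-1)$, $v=\widetilde{v}\prod_{j=1}^n e_j$, and $k(k-1)/(v-1)=t=1$.

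Next, prove by induction on $m$ that $k_m:=\widetilde{k}+\sum_{j=1}^{m}(e_j-1)=p^{2^m}+1$ for $m=0,1,\dots,n$. The base case $m=0$ is $\widetilde{k}=p+1$. For the step, \eqref{hyp:con-ext} with $t=1$ gives $e_m=(k_{m-1}-1)(k_{m-1}-2)+1$. Substituting $k_{m-1}-1=p^{2^{m-1}}$ yields
\[
e_m=p^{2^{m-1}}\bigl(p^{2^{m-1}}-1\bigr)+1=p^{2^m}-p^{2^{m-1}}+1.
\]
This is the claimed formula for $e_m$, and then
\[
k_m=k_{m-1}+e_m-1=p^{2^{m-1}}+1+p^{2^m}-p^{2^{m-1}}=p^{2^m}+1,
\]
which completes the induction. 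Taking $m=n$ gives the formula for $k$.

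For $v$, the quickest route is the identity $k(k-1)/(v-1)=1$ from Theorem~\ref{thm:addchainontop}. It gives $v=k(k-1)+1=(p^{2^n}+1)p^{2^n}+1=p^{2^{n+1}}+p^{2^n}+1$. As a cross-check, one can verify directly that $v_m=v_{m-1}e_m$ via the factorisation
\[
(x^2+x+1)(x^2-x+1)=x^4+x^2+1,\qquad x=p^{2^{m-1}},
\]
which telescopes from $\widetilde{v}=p^2+p+1$.

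None of these steps is a real obstacle. The only point needing care is checking the hypothesis of Theorem~\ref{thm:addchainontop}, namely that $e_i\ge 2$. This holds whenever $p\ge2$, and also for $p=1$ except that then every $e_i=1$. The lemma implicitly takes $p\ge2$, as in Remark~\ref{r:con-wr-ex}(d), and I would note this restriction explicitly.
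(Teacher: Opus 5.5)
Your proof is correct and follows essentially the paper's argument: compute $t=1$, then induct on the number of added nodes using $e_m=(k_{m-1}-1)(k_{m-1}-2)+t$ and $k_m=k_{m-1}+e_m-1$. The only difference is that you obtain $v$ from $k(k-1)/(v-1)=1$, whereas the paper uses the telescoping product $(x^2+x+1)(x^2-x+1)=x^4+x^2+1$, which you give as your cross-check. Your caveat that $p\ge 2$ is needed is also a valid correction to the statement's ``positive integer $p$'', since $p=1$ forces every $e_i=1$.
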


\begin{proof}
First we compute $t = \frac{\widetilde{k}(\widetilde{k}-1)}{\widetilde{v}-1} = \frac{(p+1)p}{p^2+p} = 1$, and note that it is an integer. The rest of our proof is by induction on $n$. For the case $n=1$, we use Construction \ref{con:wr-comb} and Proposition~\ref{prop:con-wr-comb} to obtain 
    \begin{align*}
    e_{1} &= (\widetilde{k}-1)(\widetilde{k}-2) + t
    = p(p-1) + 1 = p^2-p+1 \\
    k &= (\widetilde{k}-1)^2 + t = p^2+1 \\
    v &= \widetilde{v} e_{1}
    = (p^2+p+1)(p^2-p+1)
    =(p^2+1)^2 - p^2
    = p^4+p^2+1
    \end{align*}
as asserted. 

Now assume that $n \geq 2$ and that the parameters are as stated for chains with $n-1$ nodes, and consider a chain with $n$ nodes.
Then by the induction hypothesis (using the same notation as in the proof of Construction \ref{cons-iterated} and Theorem \ref{thm:addchainontop}), for the design $\D_{n-1}$ corresponding to the chain with $n-1$ nodes, we have
    \begin{align*}
    e_{i} &= p^{2^i} - p^{2^{i-1}} + 1 \text{ for all } i \leq n-1 \\
    k_{n-1} &= p^{2^{n-1}} + 1 \\
    v_{n-1} &= p^{2^{n}} + p^{2^{n-1}}+1 
    \end{align*}
Moreover, by Theorem \ref{thm:addchainontop}, $\frac{k_{n-1}(k_{n-1}-1)}{v_{n-1}-1} = t = 1$,
and applying Construction \ref{con:wr-comb} to the design $\D_{n-1}$, we have, for the design $\D_n$ (corresponding to the chain of with $n$ nodes), 
    \begin{align*}
    e_{n} &= (k_{n-1}-1)(k_{n-1}-2) + t
    = p^{2^{n-1}}(p^{2^{n-1}} - 1)+1
    = p^{2^n} - p^{2^{n-1}} + 1 \\
    k &= (k_{n-1}-1)^2 + t
    = p^{2^{n}} + 1 \\
    v &=v_{n-1} e_{n}
    = (p^{2^n} + p^{2^{n-1}} + 1)(p^{2^n} - p^{2^{n-1}} + 1)
    = p^{2^{n+1}} + p^{2^{n}} + 1
    \end{align*}
Thus the result follows by induction.
\end{proof}

One of the first explicit infinite families of `extremely imprimitive' block-transitive $2$-designs we constructed was in \cite[Construction 4.4]{chainspaper}, where the group preserved a poset block structure corresponding to a chain of arbitrary length. Example~\ref{ex:chains} shows how the approach in this paper could have been used for that construction.

\begin{example}[Designs corresponding to chains] \label{ex:chains}
% {\color{lightgray}
% The designs $\D^s$, for integers $s \geq 2$, obtained in  \cite[Construction 4.4]{chainspaper} can also be obtained by applying Construction \ref{cons-iterated} in the manner described in Theorem \ref{thm:addchainontop}. The corresponding partially ordered set $(I,\preccurlyeq)$ is a chain where $I = \{1, \ldots, s\}$ with $i \preccurlyeq i+1$ for $1 \leq i \leq s-1$. Thus
%     \[ I = \widetilde{I} \,\dot\cup\, H \quad \text{where } \widetilde{I} = \{1\}, \ H = \{2, \ldots, s\} \]
% and the subsets $\widetilde{I}$ and $H$ satisfy condition \eqref{e:chain-on-top}. The ``base'' design $\widetilde{\D}$ is any design with point set $\Delta_1$ of size $p^2 + p + 1$ and block set $B^{G_1}$, where $B$ is any point subset of size $p+1$ and $G_1$ is a $2$-transitive subgroup of $\Sym(\Delta_1)$. (There are several possibilities for the group $G_1$ and design $\widetilde{\D}$. For instance, we could take $G_1 = S_{p^2 + p + 1}$ with $\widetilde{\D}$ the complete design; or if $p$ is a prime power then we could take $G_1 = {\rm PGL}(3,p)$ with $\widetilde{\D}$ the Desarguesian projective plane of order $p$.) For each $i \in \{2, \ldots, s\}$ the parameter $e_i= p^{2^{i-1}} - p^{2^{i-2}} + 1$, as in Lemma \ref{lem:chainontop-p+1}.}
Suppose that $\widetilde{\D}$ is a $2$-($\widetilde{v},\widetilde{k},\widetilde{\lambda}$) design such that $t = \widetilde{k}(\widetilde{k}-1)/(\widetilde{v}-1)$ is an integer, and with a block-transitive, point-primitive automorphism group $\widetilde{G}$. Then the poset block structure preserved by $\widetilde{G}$ is isomorphic to the poset $\widetilde{\I}$ consisting of a single node. For integers $e_1, e_2, \ldots, e_n$ satisfying condition \eqref{hyp:con-ext}, it follows from Theorem~\ref{thm:addchainontop} that
the design $\D = \D(\widetilde{\D},e_1,e_2,\ldots,e_n)$ obtained by applying Construction \ref{cons-iterated} is a block-transitive $2$-design with poset block structure isomorphic to a chain $\I$ with $n+1$ nodes and satisfying condition \eqref{e:chain-on-top}.

There are several possibilities for the input design $\widetilde{\D}$, for instance, any of the designs listed in Remark \ref{r:con-wr-ex} (a)--(d). In particular, suppose that $\widetilde{\D}$ is as described in Remark \ref{r:con-wr-ex} (d), that is, $\widetilde{\D}$ has point set $\widetilde{\PP}$ of size $p^2 + p + 1$ and blocks of size $p+1$, for any integer $p \geq 2$.  Suppose that $\widetilde{G}$ is a $2$-transitive subgroup of $\Sym(\widetilde{\PP})$ and the block set $\widetilde{\B} = \widetilde{B}^{\widetilde{G}}$ for a block $\widetilde{B}$. (For instance, we could take $\widetilde{G} = S_{p^2 + p + 1}$, so that $\widetilde{\D}$ is the complete design, while if $p$ is a prime power then we could alternatively take $\widetilde{G} = {\rm PGL}(3,p)$ with $\widetilde{\D}$ the Desarguesian projective plane of order $p$.) Then by Lemma \ref{lem:chainontop-p+1} the parameters $e_i = p^{2^i} - p^{2^{i-1}} + 1$ for each $i \in \{1, 2, \ldots, n\}$, and if $\widetilde{G} = S_{p^2 + p + 1}$ then $\D$ is the design denoted $\D^{n+1}$ in \cite[Construction 4.4]{chainspaper}. (Note that we use a slightly different notation, since in \cite{chainspaper} the number $p^{2^i} - p^{2^{i-1}} + 1$ is denoted by $e_{i+1}$.)
\end{example}

We next exploit Construction~\ref{con:wr-comb} to construct $2$-designs preserving a poset block structure for an inverted V-shaped poset.

\begin{example}[Designs corresponding to the three node inverted V-shaped poset]  \label{ex:V-inv}

\noindent
Here the partially ordered set $\I = (I,\preccurlyeq)$ is the inverted V-shaped poset where $I = \{1,2,3\}$, such that  $1 \prec 3$, $2 \prec 3$, $1 \not\prec 2$, and $2 \not\prec 1$, as shown in Figure \ref{fig:invVposet}.
    \begin{figure}
    \includegraphics[height=4cm]{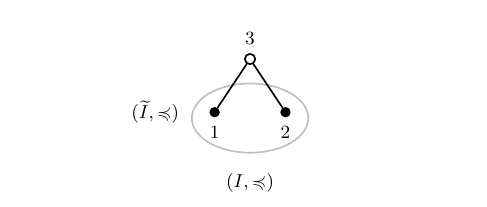}
    \caption{Poset $\I$ in Example \ref{ex:V-inv}}
    \label{fig:invVposet}
    \end{figure}
We write $I = \widetilde{I} \,\dot\cup\, H$ where $\widetilde{I} = \{1,2\}$ and $H=\{3\}$, and note that $\widetilde{I}$ and $H$ satisfy condition \eqref{eq:nodeontop}, and that $(\widetilde{I},\preccurlyeq)$ is a $2$-antichain.
By Proposition~\ref{prop:con-wr-comb}, using Construction \ref{con:wr-comb},  a $2$-design $\D$ admitting an inverted V-shaped poset block structure can be obtained from an input grid-imprimitive $2$-design $\widetilde{\D}$ (that is, there are two nontrivial invariant point-partitions which can be visualised as the set of rows and the set of columns of a rectangular grid, corresponding to the $2$-antichain) provided the parameters $\widetilde{k}$ and $\widetilde{v}$ of $\widetilde{\D}$ are such that $\widetilde{k}(\widetilde{k}-1)/(\widetilde{v}-1)$ is an integer.

One explicit family of $2$-designs admitting an inverted V-shaped poset block structure was previously constructed in \cite[Example 5.3]{SmallEx}, and we describe here how this family of $2$-designs could be obtained using Construction~\ref{con:wr-comb}.  Take $\widetilde{\PP} = \Delta_1 \times \Delta_2$ with $|\Delta_i|=p^2-(-1)^i p + 1$, for each $i$, for an arbitrary positive integer $p$, so that $|\widetilde{\PP} |=\widetilde{v}= p^4+p^2+1$. Take $\widetilde{B}$ to be  the subset of size $\widetilde{k}=p^2+1$ of the rectangular grid shown in the left-hand rectangle of \cite[Figure 6]{SmallEx}. Take $\widetilde{G}=\widetilde{G_1}\times \widetilde{G_2}$ for $2$-transitive subgroups $\widetilde{G_i}\leq \Sym(\Delta_i)$, and let $\widetilde{\B}=\widetilde{B}^{\widetilde{G}}$. Then $\widetilde{\D} = (\widetilde{\PP}, \widetilde{\B})$ is a grid-imprimitive 2-design satisfying $t = \widetilde{k}(\widetilde{k} - 1)/(\widetilde{v} - 1) = 1$, and hence, by Proposition~\ref{prop:con-wr-comb}, we obtain a $2$-design $\D(\widetilde{\D},e)$ from Construction~\ref{con:wr-comb} if we take the parameter  $e=p^2(p^2-1)+1 = p^4-p^2+1$. This gives precisely the $2$-designs described in \cite[Example 5.3]{SmallEx}  (where our parameter $e$ was denoted $e_3$); this can be seen by comparing \cite[Figure 6]{SmallEx} and Figure \ref{fig:con-wr-comb}. 

Other grid-imprimitive 2-designs can be used as base designs in our construction. In \cite[Construction 7.4, Figure 3]{multigrids}, a grid-imprimitive $2$-design  was constructed with the same parameters as in the previous paragraph, but with different base block $\widetilde{B'}$, and with the group $\widetilde{G}=S_{e_1}\times S_{e_2}$. 
We can replace $\widetilde{G}$ by $\widetilde{G}_1\times \widetilde{G}_2$, for any $2$-transitive subgroups $\widetilde{G}_i\leq S_{e_i}$, and still obtain a $2$-design $\widetilde{\D'}$, taking as blocks the images of $\widetilde{B'}$ under elements of $\widetilde{G}_i\leq S_{e_i}$. (This follows from \cite[Theorem 1.2 and Lemma 4.4]{SmallEx}.) 
Note that for $p=2$, the two base blocks $\widetilde{B}$ and $\widetilde{B'}$ are the same, but that is not the case for $p>2$. It is not obvious however if the $2$-designs from the previous paragraph and from \cite[Construction 7.4]{multigrids} are non-isomorphic.  This is hard to check computationally as the number of blocks is very large. We were able to check  using Magma that when $p=3$ and $\widetilde{G}= \AGL(1,13)\times \AGL(1,7)$, the two  designs are indeed non-isomorphic (and hence the designs obtained after applying Construction \ref{con:wr-comb} to these two designs are also non-isomorphic), but we do not know if this is true in general for other values of $p$ and choices of $\widetilde{G}=\widetilde{G_1}\times \widetilde{G_2}$.

%{\color{lightgray}Note that the input $2$-design $\widetilde{\D} = (\widetilde{\PP},\widetilde{\B})$ used in the previous paragraph can depend on the group $G$ used to generate the block-set $\widetilde{\B}=B^G$. In  \cite[Example 5.3]{SmallEx}, $G$ was taken as $G_1\times G_2$ for $2$-transitive subgroups $G_i\leq \Sym(\Delta_i)$. We found computationlly using Magma, for example, that when $p=3$, the $2$-design $\widetilde{\D}$ we obtain if we take $(G_1,G_2) = (\PSL(3,3),\PSL(3,2))$ is non-isomorphic to the $2$-design we get with $(G_1,G_2) = (\AGL(1,13),\AGL(1,7))$. }

%Variants of the above construction can also yield new examples,\; for instance, some of the $2$-designs  obtained from \cite[Construction 7.4]{multigrids} (which have the same parameters as the designs in \cite[Example 5.3]{SmallEx}).  
Also we list a few explicit examples of other possible designs $\widetilde{\D}$ mentioned  \cite{grids22}, which admit block-transitive grid-imprimitive groups and which we could have used, and which give different designs $\D(\widetilde{\D},e)$ from those above; namely those listed in the table below which were constructed in \cite{grids22}. The last column lists $e$ as in \eqref{hyp:con} so that $\D(\widetilde{\D},e)$  is a $2$-design.  We note that \cite{grids22} also lists designs for which the parameter $t$ is not an integer, so the integrality condition \eqref{hyp:con} in our construction can be a genuine restriction on the designs $\widetilde{\D}$.

    \begin{center}
    \begin{tabular}{ll|rrrr}
  $u$-$(\widetilde{v},\widetilde{k},\widetilde{\lambda}) $ design $\widetilde{\D}$  %$\widetilde{\D}$ 
   & Reference for $\widetilde{\D}$ & $|\Delta_1|$ & $|\Delta_2|$ %& $\widetilde{v}$ & $\widetilde{k}$ 
   & $t$ & $e$ \\
    \hline\hline
   $2$-$(4,3,2)$ (complete design) &
    \cite[Example 1]{grids22} with $m = 2$ & $2$ & $2$ %& $4$ & $3$ 
    & $2$ & $4$ \\
    $2$-$(4,4,1)$ (trivial design)
    &\cite[Example 2]{grids22} with $m = 2$ & $2$ & $2$ %& $4$ & $4$ 
    & $4$ & $10$ \\
    $2$-$(16,6,12)$ %(not a $3$-design)
    &\cite[Example 2]{grids22} with $m = 4$ & $4$ & $4$ %& $16$ & $6$ 
    & $2$ & $22$ \\
    $3$-$(16,6,80)$, also  $2$-$(16,6,280)$ 
    &\cite[Figure 2]{grids22} & $2$ & $8$ %& $16$ & $6$ 
    & $2$ & $22$
    \end{tabular}
    \end{center}
% {\color{blue} %do we need to explain $e_1,e_2,e_3$?
% First two rows are complete and trivial designs on 4 points, so they are not genuinely grid-imprimitive. We need to check if rows 3,4 are or not, and also are they different designs.
% }
\end{example}

All the designs in Example~\ref{ex:V-inv} can be used as input for the recursive process given in Construction~\ref{con:rect-comb} to obtain $2$-designs preserving an inverted Y-shaped poset block structure.

\begin{example}[Designs corresponding to an inverted-Y shaped poset] \label{ex:invertedY}
Consider an inverted-Y shaped poset $\I =  (I, \preccurlyeq)$ with $I = \{1, 2, \ldots, s\}$ (for some $s \geq 4$), and with $1 \prec i$ and $2 \prec i$ for all $i \geq 3$,  $1 \not\prec 2$ and $2 \not\prec 1$, as shown in Figure~\ref{fig:invertedY} (the black dots indicate the nodes for $\widetilde{\I}$).
Then $I = \widetilde{I} \,\dot\cup\, H$, where $\widetilde{I} = \{1,2\}$ and $H = \{3, \ldots, s\}$, $(\widetilde{I},\preccurlyeq)$ is a $2$-antichain, $(H, \preccurlyeq)$ is a chain of length $s-2$, and $\widetilde{I}$ and $H$ satisfy condition \eqref{e:chain-on-top}. Then each of the  $2$-designs $\widetilde{\D}$ in Example \ref{ex:V-inv} admitting a block-transitive grid-imprimitive group of automorphisms can be used in Construction \ref{cons-iterated} to construct a $2$-design admitting a block-transitive group preserving an $\I$-poset block structure. 
\end{example}

\begin{figure}
    \centering
    \includegraphics[width=0.5\linewidth]{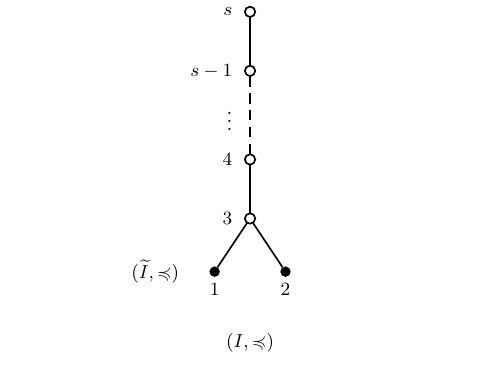}
    \caption{Poset for Example~\ref{ex:invertedY}}
    \label{fig:invertedY}
\end{figure}

Finally  we mention several additional applications of Construction~\ref{con:rect-comb} which produce $2$-designs admitting block-transitive groups that preserve an $\I$-poset block structure on points, for one of the posets $\I$ in Figure~\ref{fig:smallposets}. 

\begin{example}[Designs corresponding to other posets] \label{ex:smallposets}
The $2$-designs constructed in \cite[Examples 5.1, 5.2, and 5.4]{SmallEx} and \cite[Construction 7.7]{multigrids} all satisfy condition \eqref{hyp:con} with $t = 1$. Take $\widetilde{\D}$ to be one of these designs. Then applying Construction \ref{cons-iterated}  produces a $2$-design admitting a block-transitive group preserving an $\I$-poset block structure, where $\I$ is one of the posets pictured in Figure \ref{fig:smallposets}, with the nodes of the poset $\widetilde{\I}$ indicated by the black dots.

\begin{figure}
    \centering
    \includegraphics[width=0.7\linewidth]{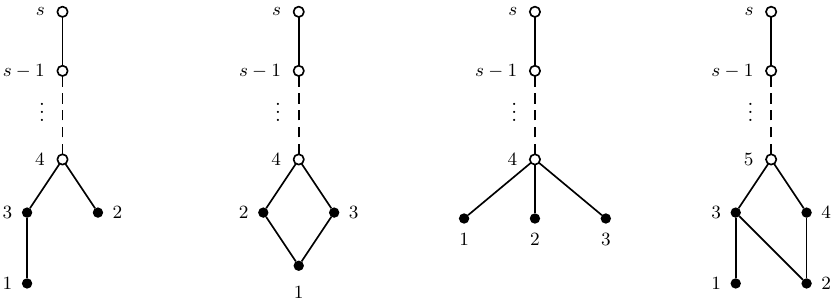}
    \caption{Posets for Example~\ref{ex:smallposets}}
    \label{fig:smallposets}
\end{figure}
\end{example}

\section{Construction: adding two independent nodes on top of the poset}

For our second general construction, we again begin with a basic step (Construction~\ref{con:rect-comb}) which takes as input a $1$-design and, this time, a pair of positive integers. As with Construction~\ref{con:wr-comb} it can produce $2$-designs under appropriate conditions on the parameters (Proposition~\ref{prop:con-rect-comb}). We explain in Remark~\ref{rem:twoontop-poset} how, given an input $2$-design with a block-transitive group preserving a poset block structure, Construction~\ref{con:rect-comb}  produces $2$-designs with block-transitive groups preserving poset block structures for posets involving two additional nodes. We then apply this theory in Example~\ref{ex:V}, and in Example~\ref{ex:Y}, to construct $2$-designs admitting a block-transitive group preserving a poset block structure for the V-shaped poset with three nodes, and for Y-shaped posets with the  stem of the Y arbitrarily large.  Finally in Example~\ref{ex:smallposets-2nodesontop} we give constructions of $2$-designs using Construction~\ref{con:rect-comb} for several other small posets.

\begin{construction} \label{con:rect-comb} 
Let  $\widetilde{\D} = (\widetilde{\PP},\widetilde{\B})$ be a $1$-design where $\widetilde{v} := |\widetilde{\PP}|$ and each block has size $\widetilde{k}\geq 2$. The number of blocks of $\widetilde{\D}$ is denoted by $\widetilde{b}$, and the number of blocks through a point is denoted by $\widetilde{r}$.
 Let $e_1,e_2>1$ be integers such that $e_1-1$ divides $e_2-1$ (so in particular $e_2\geq e_1)$, let $\Delta_i:=\{1,2,\ldots, e_i\}$ for $i=1,2$ and $\Delta=\Delta_1\times\Delta_2$. Let $m:=\frac{e_2-1}{e_1-1}$. Define 
    \[ \PP = \widetilde{\PP}\times \Delta. \]
We define $\B = \bigcup_{(i,j)\in\Delta} \B_{(i,j)}$ as follows: For each  $(i,j) \in \Delta$, a subset $B \subset \PP$ is in $\B_{(i,j)}$ if and only if there exists an $m$-to-$1$ function $\Phi:(\Delta_2\setminus\{j\}) \to (\Delta_1\setminus\{i\})$ such that
      \[
    \left\{ \begin{array}{lll}
    B \cap (\widetilde{\PP} \times \{(i,j)\}) &= &\widetilde{B} \times \{(i,j)\} \text{ for some } \widetilde{B} \in \widetilde{\B},  \\
    |B \cap (\widetilde{\PP} \times \{(x,j)\})| &= &1 \text{ if $x \neq i$}, \\
    % |B \cap (\widetilde{\PP} \times \{(a,j)\})| = 0 \text{ if $a \neq i$}, \\
        |B \cap (\widetilde{\PP} \times \{(x,y)\})| &= &1 \text{ if $x \neq i,y\neq j$ and $y\Phi=x$}, \ \text{ and}\\
        B \cap (\widetilde{\PP} \times \{(x,y)\})&= &\varnothing \text{ otherwise}
    %      |B \cap (\widetilde{\PP} \times \{(a,b)\})| = 0 \text{ if $a \neq i,b\neq j$ and $b\Phi\neq a$}, \\
    \end{array}\right.
    \]
Let $\D(\widetilde{\D},(e_1,e_2)) := (\PP,\B)$.

\begin{figure}
\begin{center}
    \includegraphics[width=0.8\textwidth]{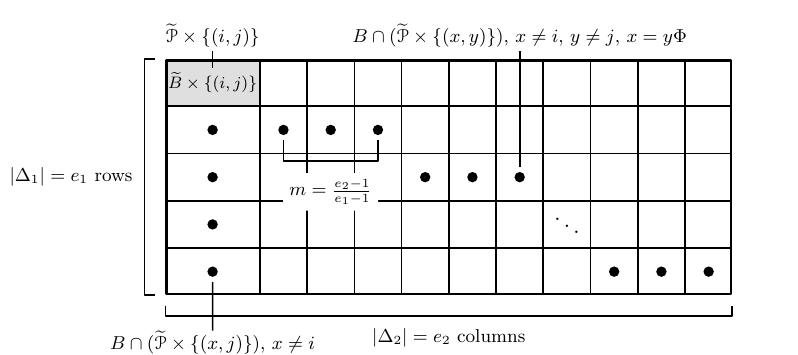}
\end{center}
\caption{Block $B \in \B_{(i,j)}$ in Construction \ref{con:rect-comb}}\label{fig:blockset-rect}
\end{figure}
  \end{construction}

 \begin{remark}\label{remark-rectcombcons}
In Construction \ref{con:rect-comb}, we see that the set $\PP$ has size $v = \widetilde{v} \cdot e_1\cdot e_2$, and each $B \in \B$ has size $k := |B| = \widetilde{k} + e_1+e_2-2 $. %, and the total number of blocks is ?? $|\B| = e \cdot \widetilde{b} \cdot \widetilde{v}^{e-1}$.
We note that the design $\D(\widetilde{\D},(e_1,e_2))$ is always a $1$-design, since it is straightforward to compute  the number $r$ of blocks containing a point of $ \widetilde{\PP} \times \Delta$ in terms of the number $\rho_{e_1,e_2}$ of  $m$-to-$1$ functions $\Delta_2\setminus\{j\}\to \Delta_1\setminus\{i\}$ as follows: 
\begin{align*}
    r
   & = \widetilde{r} \cdot \rho_{e_1,e_2}\cdot \widetilde{v}^{e_1+e_2-2} + (e_1-1) \cdot \widetilde{b} \cdot \widetilde{v}^{e_1-2}\cdot \rho_{e_1,e_2}\cdot \widetilde{v}^{e_2-1} \\
   &\phantom{=}\;\; +(e_1-1)(e_2-1)\cdot \widetilde{b} \cdot \widetilde{v}^{e_1-1}\cdot \frac{\rho_{e_1,e_2}}{e_1-1}\cdot \widetilde{v}^{e_2-2}\\
   & = \rho_{e_1,e_2}\cdot \widetilde{v}^{e_1+e_2-3}(\widetilde{r}\cdot \widetilde{v}+ (e_1+e_2-2) \cdot \widetilde{b})\\
     & = \rho_{e_1,e_2}\cdot \widetilde{v}^{e_1+e_2-3}\cdot \widetilde{b}\cdot (\widetilde{k}+ e_1+e_2-2) \\
        & = \rho_{e_1,e_2}\cdot \widetilde{v}^{e_1+e_2-3}\cdot \widetilde{b}\cdot k 
    \end{align*}
where for the second last equality we use \eqref{eq:vr=bk}.
It is easy to see that $\Aut(\widetilde{\D}) \wr (S_{e_1}\times S_{e_2}) \leqslant \Aut(\D(\widetilde{\D},(e_1,e_2)))$. Moreover, if $\widetilde{G} \leqslant \Aut(\widetilde{\D})$ is transitive on $\widetilde{\B}$, then $\widetilde{G} \wr (S_{e_1}\times S_{e_2})$ is transitive on $\B$. 
\end{remark}

The next remark provides an expression for  $\rho_{e_1,e_2}$ and also several related quantities.

\begin{remark}\label{rem:countingfunctions}
We see as follows that, for positive integers $e_1, n$ with $e_1\geq2$, the number of $m$-to-$1$ functions from a set $\Delta_2'$ of size $e_2-1=m(e_1-1)$ to a set $\Delta_1'$ of size $e_1-1$ is 
    \begin{equation}\label{eq:rho}
    \rho_{e_1,e_2}=\frac{(e_2-1)!}{(m!)^{e_1-1}}.
    \end{equation}
Indeed such a function corresponds to an ordered partition $(\delta_i)_{i\in\Delta_1'}$ of $\Delta_2'$, with $\delta_i$ being the set of elements in $\Delta_2'$ that map to $i$, so $\delta_i$ has size $n$ for each $i$. Different ordered partitions correspond to different $n$-to-1 functions. 
Each ordering of the domain yields such a partition:  the first $n$ elements form $\delta_{i_1}$, the second set of $n$ elements  form $\delta_{i_2}$, and so on. But there are multiple orderings that yield the same ordered partition, namely if we apply to each of these  $e_1-1$ subsets of size $m$ an arbitrary permutation from $S_m$. Thus there are ${(m!)^{e_1-1}}$ orderings that yield the same ordered partition, and hence function.
%Further, if we apply to each of these  $e_1-1$ subsets of size $n$ an arbitrary permutation from $S_n$, then we obtain the same $n$-to-1 function. Thus $\Phi$ depends only on the ordered partition $(\delta_1,\dots,\delta_{e_1-1})$ of $\Delta_2'$ with $e_1-1$ subsets of size $n$, and Now the symmetric group $S_{e_2-1}$ acts transitively on the set of \emph{unordered} partitions of $\Delta_2'$ into $e_1-1$ subsets each of size $n$, with stabiliser $S_{n}\wr S_{e_1-1}$, so the number of unordered such partitions is {\color{red} I find this explanation quite confusing, wasn't it clear enough before?}
%\[
%|S_{e_2-1} :S_{n}\wr S_{e_1-1}| = \frac{(e_2-1)!}{n!^{e_1-1}\cdot (e_1-1)!}
%\]
%(This is true even if $e_1=2$.) Also each unordered 
%partition of this kind corresponds to exactly $(e_1-1)!$ ordered partitions. 
Thus the number of ordered partitions, and therefore the quantity $\rho_{e_1,e_2}$ is as in \eqref{eq:rho}.

We will need further information in our analysis: if the image of one given point $\alpha\in \Delta_2'$ is fixed and equal to $\gamma$, then the corresponding number of $m$-to-$1$ functions is equal to  the number of  ordered partitions $(\delta_i)_{i\in\Delta_1'}$ of $\Delta_2'\setminus\{\alpha\}$ with $|\delta_\gamma|=m-1$ and all other $\delta_i$ of size $m$,  and a similar argument shows that this is 
\begin{equation}\label{eq:nto1fctfixingone}
        \frac{(e_2-2)!}{(m-1)!(m!)^{e_1-2}}=\rho_{e_1,e_2} \frac{m}{e_2-1}=\frac{\rho_{e_1,e_2}}{e_1-1}.
    \end{equation}
Similarly, if $m\geq2$ and 
two given distinct points $\alpha, \beta\in \Delta_2'$ have the same fixed image $\gamma$, then the corresponding number of $n$-to-1 functions is equal to  the number of  ordered partitions $(\delta_i)_{i\in\Delta_1'}$ of $\Delta_2'\setminus\{\alpha,\beta\}$ with $|\delta_\gamma|=m-2$ ($\delta_\gamma$ is empty if $m=2$) and all other $\delta_i$ of size $m$,  and a similar argument shows that this is     \begin{equation}\label{eq:nto1fctfixingtwosameimage}
    \frac{(e_2-3)!}{(m-2)!(m!)^{e_1-2}}=\rho_{e_1,e_2} \frac{m(m-1)}{(e_2-1)(e_2-2)}=\rho_{e_1,e_2} \frac{(m-1)}{(e_1-1)(e_2-2)}.
    \end{equation}
Note there are no such functions if $n=1$, that is, if $e_1=e_2$, and provided $e_2\geq3$ this agrees with  \eqref{eq:nto1fctfixingtwosameimage}.

Finally, if $e_1\geq3$ (so also $e_2\geq 3$  since $e_2\geq e_1$) and 
two given distinct points $\alpha, \beta\in \Delta_2'$ have distinct fixed images $\gamma,\epsilon$ respectively, then the corresponding number of $m$-to-$1$ functions is equal to  the number of  ordered partitions $(\delta_i)_{i\in\Delta_1'}$ of $\Delta_2'\setminus\{\alpha,\beta\}$ with $|\delta_\gamma|=|\delta_\epsilon|=m-1$ and all other $\delta_i$ of size $m$,  and  this is
\begin{equation}\label{eq:nto1fctfixingtwodistinctimage}
    \frac{(e_2-3)!}{(m-1)!^2m!^{e_1-3}}=\rho_{e_1,e_2} \frac{m^2}{(e_2-1)(e_2-2)}=\rho_{e_1,e_2} \frac{m}{(e_1-1)(e_2-2)}.
    \end{equation}
Note there are no such functions if $e_2=2$ (and the formula \eqref{eq:nto1fctfixingtwodistinctimage} is not valid in this case).

\end{remark}

\begin{proposition} \label{prop:con-rect-comb}
The point-block structure $\D(\widetilde{\D},(e_1,e_2))$ in Construction \ref{con:rect-comb} is a $2$-design if and only if 
    \begin{equation}\label{hyp:con-rect}
        \text{\parbox[t]{0.8\textwidth}{\centering
        $\widetilde{\D}$ is a $2$-design, $t := \frac{\widetilde{k}(\widetilde{k}-1)}{{\widetilde{v}}-1}$ is an integer,  \\ $e_1 = ({\widetilde{k}}-1)({\widetilde{k}}-2) + t$, and $e_2=(\widetilde{k}-1)^2(\widetilde{k}^2-2\widetilde{k}+2t-2)+t^2-2t+2.$
        }}
    \end{equation}
% In that case, if $\widetilde{\D}$ is a $2$-$(\widetilde{v}, \widetilde{k}, \widetilde{\lambda})$ design, 
Moreover, if \eqref{hyp:con-rect} holds and $\widetilde{\D}$ is a $2$-$(\widetilde{v},\widetilde{k},\widetilde{\lambda})$ design, then $\D(\widetilde{\D},(e_1,e_2))$ is a $2$-$(v,k,\lambda)$ design with parameters 
    \begin{align*}
    v &= \widetilde{v}\cdot e_1\cdot e_2\\
    k &= \widetilde{k} + e_1+e_2 - 2\\
    \lambda &= \widetilde{\lambda} \cdot \widetilde{v}^{e_1+e_2-2}
    \end{align*}
and  \eqref{e:kv} holds, that is, $t=k(k-1)/(v-1)$.
% Moreover, if \eqref{hyp:con} holds,
    % \begin{equation} %\label{e:kv} 
    % \frac{k(k-1)}{v-1} = t.
    % \end{equation}
\end{proposition}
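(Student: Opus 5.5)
The plan is to imitate the proof of Proposition~\ref{prop:con-wr-comb}. For two points of $\PP=\widetilde{\PP}\times\Delta$ I will count the blocks of $\D(\widetilde{\D},(e_1,e_2))$ containing both, separating cases by the relative position of their cells $\widetilde{\PP}\times\{(a,b)\}$ and $\widetilde{\PP}\times\{(c,d)\}$. Recall that a block in $\B_{(i,j)}$ meets cell $(i,j)$ in a block of $\widetilde{\D}$. It meets each cell $(x,j)$ with $x\ne i$, and each cell $(x,y)$ with $x\neq i$, $y\ne j$, $y\Phi=x$, in exactly one point, and no other cell. So every count is $\widetilde{b}$ or $\widetilde{r}$ or $\widetilde{\lambda}(p,q)$, times a power of $\widetilde{v}$, times a number of admissible $m$-to-$1$ maps $\Phi$. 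These numbers of maps are supplied by \eqref{eq:rho}, \eqref{eq:nto1fctfixingone}, \eqref{eq:nto1fctfixingtwosameimage} and \eqref{eq:nto1fctfixingtwodistinctimage}.

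\emph{Case 1: same cell $(a,b)$.} Only blocks in $\B_{(a,b)}$ qualify, giving $\widetilde{\lambda}(p,q)\,\rho_{e_1,e_2}\,\widetilde{v}^{e_1+e_2-2}$. Exactly as before, this forces $\widetilde{\D}$ to be a $2$-design, and it identifies $\lambda$.

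\emph{Case 2: $a\ne c$, $b=d$.} The classes $\B_{(a,b)}$ and $\B_{(c,b)}$ each contribute $\widetilde{r}\rho\widetilde{v}^{e_1+e_2-3}$. Each $\B_{(x,b)}$ with $x\notin\{a,c\}$ contributes $\widetilde{b}\rho\widetilde{v}^{e_1+e_2-4}$. Classes $\B_{(x,j)}$ with $j\neq b$ contribute nothing, since we would need $b\Phi=a$ and $b\Phi=c$, or the cell is empty. Using \eqref{eq:vr=bk}, the total is $\rho\widetilde{b}\widetilde{v}^{e_1+e_2-4}(2\widetilde{k}+e_1-2)$. Equating with Case 1 and using $\widetilde{\lambda}\widetilde{v}^2/\widetilde{b}=t\widetilde{v}$ from \eqref{eq:tformula} gives $2\widetilde{k}+e_1-2=t\widetilde{v}$. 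The algebra already done in Proposition~\ref{prop:con-wr-comb} turns this into $e_1=(\widetilde{k}-1)(\widetilde{k}-2)+t$, so in particular $t$ is an integer.

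\emph{Case 3: $a=c$, $b\ne d$.} Classes with $i=a$ contribute nothing. For $i\ne a$ and $j\in\{b,d\}$, one cell is of type $(x,j)$ and the other needs $\Phi$ to take value $a$ at a fixed point. By \eqref{eq:nto1fctfixingone} this gives $2(e_1-1)\cdot\widetilde{b}\widetilde{v}^{e_1+e_2-4}\rho/(e_1-1)$. For $i\ne a$ and $j\notin\{b,d\}$, \eqref{eq:nto1fctfixingtwosameimage} gives $(e_1-1)(e_2-2)\cdot\widetilde{b}\widetilde{v}^{e_1+e_2-4}\rho(m-1)/((e_1-1)(e_2-2))$. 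The total is $\rho\widetilde{b}\widetilde{v}^{e_1+e_2-4}(m+1)$. The $2$-design condition becomes $m+1=t\widetilde{v}$, that is, $e_2-1=(e_1-1)(t\widetilde{v}-1)$.

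Substituting $t\widetilde{v}=2\widetilde{k}+e_1-2$ then gives $e_2-1=(e_1-1)(2\widetilde{k}+e_1-3)$. I will check that this equals the expression claimed in \eqref{hyp:con-rect}; that is the same polynomial identity computed in Remark~\ref{rem:firsttwoes} for the value of $e_2$ in Theorem~\ref{thm:addchainontop}.

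\emph{Case 4: $a\neq c$, $b\ne d$.} This is where I expect the main obstacle: many subcases, and degenerate cases $e_1=2$ or $m=1$ where \eqref{eq:nto1fctfixingtwodistinctimage} or \eqref{eq:nto1fctfixingtwosameimage} fail. The subcases are:
\begin{itemize}
\item $(i,j)$ equal to one of the two cells;
\item $i=a$ with $j=d$, or $i=c$ with $j=b$, so that one cell is a row cell;
\item $j\in\{b,d\}$ with $i\notin\{a,c\}$;
\item $i\in\{a,c\}$ with $j\notin\{b,d\}$;
\item $i,j$ both generic, needing $b\Phi=a$ and $d\Phi=c$, counted by \eqref{eq:nto1fctfixingtwodistinctimage}.
\end{itemize}
I will sum these, factor out $\rho\widetilde{b}\widetilde{v}^{e_1+e_2-4}$, and verify that once the Case 2 and Case 3 conditions hold, the bracket equals $t\widetilde{v}$ automatically, treating $e_1=2$ separately. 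This gives the ``if'' direction; ``only if'' already follows from Cases 1--3.

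Finally, $v$ and $k$ come from Remark~\ref{remark-rectcombcons} and $\lambda$ from Case 1. For the identity $k(k-1)/(v-1)=t$ I will write $k=\widetilde{k}+(e_1-1)+(e_2-1)$ and $v=\widetilde{v}e_1e_2$. Then I will substitute the expressions for $e_1,e_2$ and verify the polynomial identity in $\widetilde{k}$ and $t$ directly. A cleaner alternative is to observe that these parameters coincide with $k_2$ and $v_2$ of Theorem~\ref{thm:addchainontop} for $n=2$, where $k_2(k_2-1)/(v_2-1)=t$ is already proved.
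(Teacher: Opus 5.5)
Your proposal is correct and follows the paper's proof essentially step for step. It uses the same four cases, the same block counts from \eqref{eq:rho}--\eqref{eq:nto1fctfixingtwodistinctimage}, the same observation that Case~4 holds automatically once Cases~2 and~3 do (with $e_1=2$ handled separately), and the same appeal to Theorem~\ref{thm:addchainontop} with $n=2$ for $k(k-1)/(v-1)=t$.

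One caution concerns $\lambda$. Your Case~1 count correctly gives $\lambda=\widetilde{\lambda}\,\rho_{e_1,e_2}\,\widetilde{v}^{e_1+e_2-2}$. The value $\lambda=\widetilde{\lambda}\,\widetilde{v}^{e_1+e_2-2}$ displayed in the statement (and the paper's ``$\widetilde{\lambda}\cdot\widetilde{v}^{e-1}$'' in its proof) omits the factor $\rho_{e_1,e_2}=(e_2-1)!/(m!)^{e_1-1}$. That factor equals $1$ only when $e_1=2$. You can confirm this via $\lambda=bt/v$, using $b=e_1e_2\,\widetilde{b}\,\rho_{e_1,e_2}\,\widetilde{v}^{e_1+e_2-2}$ and $\widetilde{\lambda}=\widetilde{b}t/\widetilde{v}$. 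So you should state the corrected $\lambda$, not claim to have proved the displayed one.
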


% However the expression for $e_1$ and $e_2$ in \eqref{hyp:con-rect} could conceivably yield $e_1=1$ so $e_2\geq 1$ (possibly equal). Note that $e_1\leqslant e_2$, so if $e_2=1$ then $e_1=1$ too. Assume $e_1=1$. As $e_1$ has the same value as $e$ in Proposition \ref{prop:con-wr-comb}, it follows that  $(\widetilde{v}, \widetilde{k},t)=(3,2,1)$, corresponding to the complete $2$-$(3,2,1)$ design $\widetilde{\D}$, in which case we compute that $e_2=1$ too. On the other hand the parameters $(\widetilde{v}, \widetilde{k},t,e_1,e_2)=(2,2,2,2,4)$ do correspond to an example of a $2$-design in Proposition~\ref{prop:con-rect-comb}, namely $\widetilde{\D}$ is the trivial $2$-$(2,2,1)$ design and $\D(\widetilde{\D},(2,4))$ is a $2$-$(16,6,16)$ design (which is not trivial nor complete). This shows that Construction~\ref{con:rect-comb} may yield interesting example even if the input design $\widetilde{\D}$ is trivial.}

\begin{proof}
For all pairs of distinct points $(p,(i,j))$ and $(q,(i',j'))$ in $\PP$, we determine the number of blocks of $\B$ containing both points. Here $p,q\in\widetilde{\PP}$, $i,i'\in\Delta_1$ and $j,j'\in\Delta_2$.

\medskip\noindent
\emph{Case $(i',j')=(i,j)$}:\quad 
In this case $p, q$ must be distinct.
Let $\widetilde{\lambda}(p,q)$ be the number of blocks in $\widetilde{\B}$ containing $p$ and $q$.
By Construction~\ref{con:rect-comb}, the only blocks that can contain these two points are in $\B_{(i,j)}$, and the number of such blocks is 
    \begin{equation} \label{eq:sameclass-rect}
    \widetilde{\lambda}(p,q)\cdot \rho_{e_1,e_2} \cdot \widetilde{v}^{e_1+e_2-2}.  
    \end{equation}
In particular, the number of blocks containing a point-pair of this type is independent of $p,q$ if and only if $\widetilde{\D}$ is a $2$-$(\widetilde{v}, \widetilde{k}, \widetilde{\lambda})$ design.

\medskip\noindent
\emph{Case $j'=j$ but $i'\neq i$}:\quad By Construction~\ref{con:rect-comb}, a block containing these two points must lie in $\B_{(\ell,j)}$ for some $\ell\in\Delta_1.$
The number of blocks in $\B_{(i,j)}$ containing these two points is $\widetilde{r} \cdot \widetilde{v}^{e_1-2}\cdot \rho_{e_1,e_2}\cdot \widetilde{v}^{e_2-1}$, which by \eqref{eq:vr=bk} is equal to $\widetilde{b}\cdot \widetilde{k} \cdot \rho_{e_1,e_2}\cdot\widetilde{v}^{e_1+e_2-4}$.
This is also the number of blocks in $\B_{(i',j)}$ containing the two points. Also if $e_1>2$ and $\ell \notin \{i,i'\}$, then the number of blocks in $\B_{(\ell,j)}$ containing the two points is $\widetilde{b} \cdot \widetilde{v}^{e_1-3}\cdot \rho_{e_1,e_2}\cdot \widetilde{v}^{e_2-1}$. Thus, the total number of blocks in $\B$ containing $(p,(i,j))$ and $(q,(i',j))$ is 
    \begin{equation} \label{eq:samecolumn-rect}
    2 \widetilde{b}\cdot \widetilde{k} \cdot \rho_{e_1,e_2}\cdot\widetilde{v}^{e_1+e_2-4}+ (e_1-2) \cdot \widetilde{b}\cdot \rho_{e_1,e_2}\cdot \widetilde{v}^{e_1+e_2-4}=\widetilde{b} \cdot \widetilde{v}^{e_1+e_2-4}\cdot \rho_{e_1,e_2}\cdot(2\widetilde{k}+e_1-2).
    \end{equation}
Note:  the numbers of blocks containing a given point-pair lying in either of these two cases is equal to some constant, independent of $p,q$, if and only if 
  \begin{equation} \label{eq:2design-rect}
    \text{$\widetilde{\D}$ is a $2$-$(\widetilde{v}, \widetilde{k}, \widetilde{\lambda})$ design  and }  \widetilde{b}\cdot (2\widetilde{k} + e_1 - 2) = \widetilde{\lambda} \cdot \widetilde{v}^2.
    \end{equation}
Note further that the condition in  \eqref{eq:2design-rect}  is the same as that in  \eqref{eq:2design-comb} with $e=e_1$. Thus the argument following \eqref{eq:2design-comb} is valid also in our case and yields that 
\begin{equation}\label{eq:2design-rect2}
\text{$t := \frac{\widetilde{k}(\widetilde{k}-1)}{{\widetilde{v}}-1}$ is an integer,  and $e_1 = ({\widetilde{k}}-1)({\widetilde{k}}-2) + t$.}    
\end{equation}
Thus, requiring a constant number of blocks containing  point-pairs of just these first two types already gives three of the four conditions  in \eqref{hyp:con-rect}.

\medskip\noindent
\emph{Case $i'=i$ but $j'\neq j$}:\quad By Construction~\ref{con:rect-comb},  a block containing these two points cannot lie in  $\B_{(i,\ell)}$ for any $\ell\in \Delta_2$.
For any $h\in\Delta_1\setminus\{i\}$, the number of blocks in $\B_{(h,j)}$ containing these two points is $\widetilde{b}\cdot \widetilde{v}^{e_1-2}\cdot \frac{\rho_{e_1,e_2}}{e_1-1}\cdot \widetilde{v}^{e_2-2}$. (Here we used \eqref{eq:nto1fctfixingone} since the image of the point $j'$ under the $m$-to-$1$ function $\Phi$ must equal $i$.) This is also the number of blocks in $\B_{(h,j')}$ containing the two points.

If $e_2=2$ then, since $e_1-1$ divides $e_2-1$, also $e_1=2$ and so $m=1$. In this case the discussion in the previous paragraph has covered all possibilities for blocks containing $(p,(i,j))$ and $(q,(i,j'))$, and  the number of blocks is  $2\cdot \widetilde{b}\cdot \rho_{e_1,e_2}\cdot \widetilde{v}^{e_1+e_2-4}$. We will show that this quantity matches the expression for larger $e_2$-values in \eqref{eq:samerow-rect}.

Assume now that $e_2>2$. For $h\neq i$ and  $\ell \notin \{j,j'\}$, the number of blocks in $\B_{(h,\ell)}$ containing the point-pair  is $\widetilde{b} \cdot \widetilde{v}^{e_1-1}\cdot \frac{(m-1)\rho_{e_1,e_2}}{(e_1-1)(e_2-2)}\cdot \widetilde{v}^{e_2-3}$, where we have used \eqref{eq:nto1fctfixingtwosameimage} since the $m$-to-$1$ function $(\Delta_2\setminus\{\ell\})\to (\Delta_1\setminus\{h\})$  defining the block in Construction~\ref{con:rect-comb} must map both $j$ and $j'$ to $i$.  
Thus the total number of blocks in $\B$ containing $(p,(i,j))$ and $(q,(i,j'))$, for $e_2>2$, is 
    \begin{align} \label{eq:samerow-rect}
   &2\cdot (e_1-1)\cdot \widetilde{b}\cdot \frac{\rho_{e_1,e_2}}{e_1-1}\cdot \widetilde{v}^{e_1+e_2-4}+(e_1-1)(e_2-2)\cdot \widetilde{b} \cdot \frac{(m-1)\rho_{e_1,e_2}}{(e_1-1)(e_2-2)}\cdot \widetilde{v}^{e_1+e_2-4} \\
   &= \widetilde{b}\cdot \widetilde{v}^{e_1+e_2-4}\cdot \rho_{e_1,e_2}\cdot (m+1) \notag
    \end{align}
and we note that this expression also holds for $e_2=2$, since $m=1$ in that case. 
Therefore, the  
number of blocks containing a point-pair of this third type is  equal to the constant number for point-pairs of the first two types if and only if the conditions of \eqref{eq:2design-rect}, \eqref{eq:2design-rect2}, and the following additional condition all hold: 
\begin{equation}\label{eq:rect-id}
m+1=2\widetilde{k} + e_1 - 2=2\widetilde{k}+({\widetilde{k}}-1)({\widetilde{k}}-2) + t-2={\widetilde{k}}^2-{\widetilde{k}}+ t.
\end{equation}
Since $m=(e_2-1)/(e_1-1)$, the second parameter  
$e_2=1+m(e_1-1)$ is determined by \eqref{eq:2design-rect2} and \eqref{eq:rect-id} as a function of $\widetilde{k}$ and $t$, namely
\begin{align*}
e_2 &= 1 + ({\widetilde{k}}^2-{\widetilde{k}} + t-1)(({\widetilde{k}}-1)({\widetilde{k}}-2) + t-1)\\
&= (\widetilde{k}-1)^2(\widetilde{k}^2-2\widetilde{k}+2t-2)+t^2-2t+2,
\end{align*}
which is the expression in \eqref{hyp:con-rect}.
 Finally we consider the point-pairs with $(i,j)$ and $(i',j')$ in general position.

\medskip\noindent
\emph{Case $i'\ne i$ and $j'\neq j$}:\quad 
The number of blocks in $\B_{(i,j)}$ containing these two points is $\widetilde{r}\cdot \widetilde{v}^{e_1-1}\cdot \frac{\rho_{e_1,e_2}}{e_1-1}\cdot \widetilde{v}^{e_2-2}$ (where we have used \eqref{eq:nto1fctfixingone} since the image of one point is already decided in the $m$-to-$1$ function). By \eqref{eq:vr=bk} this quantity is equal to $\widetilde{b}\cdot\widetilde{k}\cdot \frac{\rho_{e_1,e_2}}{e_1-1}\cdot \widetilde{v}^{e_1+e_2-4}$, and this is also the number of blocks in $\B_{(i',j')}$ containing the point-pair. 

By Construction~\ref{con:rect-comb},  a block containing these two points cannot lie in  $\B_{(i,\ell)}$ for any 
$\ell\neq j$ or  in $\B_{(i',\ell)}$ for any $\ell\neq j'$.
In particular, if $e_1=2$, then  we have exhausted all possibilities, and  so the number of blocks containing this point-pair is  $2\cdot \widetilde{b}\cdot\widetilde{k}\cdot \frac{\rho_{e_1,e_2}}{e_1-1}\cdot \widetilde{v}^{e_1+e_2-4}$. We will show that this quantity matches the expression in \eqref{eq:diffrowcolumn-rect} for larger $e_1$-values.

Now assume that $e_1>2$, and note that this implies that $e_2>2$ since $e_2\geq e_1$. 
% by the last comment of Remark \ref{rem:countingfunctions}).
For  $u \notin \{i,i'\}$, the number of blocks in $\B_{(u,j)}$ containing the point-pair is $\widetilde{b} \cdot \widetilde{v}^{e_1-2}\cdot \frac{\rho_{e_1,e_2}}{e_1-1}\cdot \widetilde{v}^{e_2-2}$, where again we have used \eqref{eq:nto1fctfixingone} since the image of one point is already decided in the $m$-to-$1$ function. 
This quantity is also the number of blocks in $\B_{(u,j')}$ containing the point-pair.
Finally for  $u \notin \{i,i'\}$ and  $\ell \notin \{j,j'\}$, the number of such blocks in $\B_{(u,\ell)}$ is
 $\widetilde{b} \cdot \widetilde{v}^{e_1-1}\cdot \frac{m \rho_{e_1,e_2}}{(e_1-1)(e_2-2)} \cdot \widetilde{v}^{e_2-3}$, and this time we have used \eqref{eq:nto1fctfixingtwodistinctimage} since the images of two points having distinct images are already decided in the $m$-to-$1$ function.

%In order for this number 
 
%
Thus the total number of blocks in $\B$ containing $(p,(i,j))$ and $(q,(i',j'))$ is
    \begin{align*}
    &2 \cdot \widetilde{b} \cdot \widetilde{k} \cdot \frac{\rho_{e_1,e_2}}{e_1-1} \cdot \widetilde{v}^{e_1+e_2-4}
    + 2(e_1-2) \cdot \widetilde{b} \cdot \frac{\rho_{e_1,e_2}}{e_1-1} \cdot \widetilde{v}^{e_1+e_2-4} \\
    &+(e_1-2)(e_2-2) \cdot \widetilde{b} \cdot \frac{m \rho_{e_1,e_2}}{(e_1-1)(e_2-2)} \cdot \widetilde{v}^{e_1+e_2-4} \\
    &= \widetilde{b}\cdot \frac{\rho_{e_1,e_2}}{e_1-1} \cdot \widetilde{v}^{e_1+e_2-4} \cdot  
\left(2 \widetilde{k} + 2(e_1-2) +m(e_1-2) \right)
 \end{align*}
 which we write as
 \begin{equation}\label{eq:diffrowcolumn-rect}
\widetilde{b}\cdot \frac{\rho_{e_1,e_2}}{e_1-1}\cdot \widetilde{v}^{e_1+e_2-4}\cdot  
\left(2 \widetilde{k}
+(e_1-2)(m+2) \right).
  \end{equation} 
 This expression agrees with the one we obtained above for the case $e_1=2$, and hence applies in all cases.

    In particular, the number of blocks containing a point-pair in this last case is equal to the number of blocks for point-pairs of the previous  type, if and only if the expressions in \eqref{eq:samerow-rect} and \eqref{eq:diffrowcolumn-rect} are equal, and this is true if and only if 
    \[
    m+1=\frac{2 \widetilde{k}
    +(e_1-2)(m+2)}{e_1-1}.
    \]
    This equality can be rewritten as $m+1=2 \widetilde{k}+e_1-2$, and holds if \eqref{eq:rect-id} does. Thus if the number of blocks containing a point-pair in the first three cases is constant, then this constant is equal to the number of blocks containing a point-pair in the last case (without any further restrictions). We therefore conclude that $\D(\widetilde{D},(e_1,e_2))$ is a 2-design if and only if all conditions in \eqref{hyp:con-rect} hold.

For the final assertions assume that  \eqref{hyp:con-rect} holds and that $\widetilde{\D}$ is a $2$-$(\widetilde{v},\widetilde{k},\widetilde{\lambda})$ design. Then the parameters of $\D(\widetilde{\D}, (e_1,e_2))$ are:   
$v=\widetilde{v}\cdot e_1\cdot e_2$, $\lambda = \widetilde{\lambda} \cdot \widetilde{v}^{e-1}$ (as computed in \eqref{eq:sameclass-rect}), and 
     $k = \widetilde{k} + e_1+e_2 - 2$. 
Finally, we note, by \eqref{hyp:con-rect}, that $e_1, e_2$ have the same values as in Remark \ref{rem:firsttwoes}, and that these are the first two values in Construction \ref{cons-iterated}. Thus it follows from Theorem \ref{thm:addchainontop} (with $n=2$) that \eqref{e:kv} holds.
\end{proof}

\begin{remark}
(a) We note that in Construction~\ref{con:rect-comb} the parameters satisfy $e_2\geq e_1\geq2$ and $\widetilde{k}\geq 2$, and in \eqref{hyp:con-rect} the integer $t$ is always positive. The expression in \eqref{hyp:con-rect} for $e_1$ then yields (since  $e_1\geq 2$) that either $\widetilde{k}\geq3$ or $t\geq2$. Moreover, when $\widetilde{k}\geq3$ then $e_1\geq 3$ and $e_2\geq 13.$

\medskip\noindent(b)  On the other hand if $\widetilde{k}=2$, then $e_1=t$ so we must have $t\geq 2$ and the expressions in \eqref{hyp:con-rect} yield $(t,\widetilde{v},e_1,e_2)=(2, 2, 2,4)$. These values correspond to a unique  $2$-design in Proposition~\ref{prop:con-rect-comb}, namely for $\widetilde{\D}$ the trivial $2$-$(2,2,1)$ design we obtain as $\D(\widetilde{\D},(2,4))$ a $2$-$(16,6,16)$ design which is neither trivial nor complete. One block for this design is shown in Figure \ref{fig:2-(16,6,16)}. This example demonstrates that Construction~\ref{con:rect-comb} may yield interesting designs even when applied with trivial input designs $\widetilde{\D}$.

\medskip\noindent
(c) Let $\D(\widetilde{\D},(2,4))$ as described in (b), with blocks as in Figure \ref{fig:2-(16,6,16)}. Then  $S_2 \wr (S_2\times S_4) \leqslant \Aut(\D)$ by Remark~\ref{remark-rectcombcons}. However, it can be checked with Magma \cite{magma} that the full automorphism group $\Aut(\D)$ is larger: namely  $\Aut(\D) = S_2\wr( S_4\wr S_2)$, preserving a 3-chain of partitions as described in \cite{chainspaper}. Indeed  the necessary and sufficient conditions to be a 2-design given in \cite{chainspaper} are satisfied:  since when $e_1=2$, the `columns partition' is irrelevant and the block-sets obtained as images of the block $B$ in  Figure \ref{fig:2-(16,6,16)}, under the two groups  $S_2 \wr (S_2\times S_4)$ and $S_2\wr( S_4\wr S_2)$, are the same.

\medskip\noindent
(d) It seems to us very unusual that the automorphism group of the design in part (c) is so much larger than the group  $\Aut(\widetilde{\D}) \wr (S_{e_1}\times S_{e_2})$ given by Remark~\ref{remark-rectcombcons}. For the reasons explained in (c), if $e_1=2$, then the  full automorphism group of the block design  $\D(\widetilde{\D}, (2,e_2))$  (not necessarily a $2$-design) certainly contains $\Aut(\widetilde{\D})\wr( S_{e_2}\wr S_{2})$ and so is larger than $\Aut(\widetilde{\D}) \wr (S_{e_1}\times S_{e_2})$. 
If $e_1\geq3$ then the  group $\Aut(\widetilde{\D})\wr( S_{e_2}\wr S_{e_1})$ does not preserve the block-set of $\D(\widetilde{\D}, (e_1,e_2))$, see Figure~\ref{fig:blockset-rect}.
We wonder if the example in (c) (and others with $e_1=2$) are exceptional, and we believe:
\begin{conjecture}
     If $e_1\geq 3$, then $\Aut(\D(\widetilde{\D}, (e_1,e_2)) )= \Aut(\widetilde{\D}) \wr (S_{e_1}\times S_{e_2})$. 
\end{conjecture}

\noindent
We checked this computationally on a few designs $\D(\widetilde{\D}, (e_1,e_2)$ for $e_1=3$, small values for $e_2$ and $\widetilde{\D}$ various small designs on $2$ or $3$ points (but no $2$-design as even the smallest example is too large to be able to check this computationally).
\end{remark}

\begin{figure}
    \centering
    \includegraphics{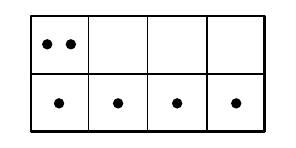}
    \caption{The design $\D(\widetilde{\D}, (2,4))$, where $\D$ is the trivial $2$-$(2,2,1)$ design}
    \label{fig:2-(16,6,16)}
\end{figure}

Next we consider the case where the input design $\widetilde{\D}=(\widetilde{\PP},\widetilde{\B})$ to Construction~\ref{con:rect-comb} admits a group $\widetilde{G}$ preserving a poset block structure on $\widetilde{\PP}$. We explain in Remark~\ref{rem:twoontop-poset} how the output design  $\D(\widetilde{\D},\{e_1,e_2\})$ also preserves a poset block structure for a larger poset.

\begin{remark}\label{rem:twoontop-poset}
For any subgroup $\widetilde{G} \leq \Aut(\widetilde{\D})$, the group $G = \widetilde{G} \wr (S_{e_1} \times S_{e_2})$ preserves the following partitions of $\widetilde{\PP} \times \Delta = \widetilde{\PP} \times (\Delta_1 \times \Delta_2)$:
    \begin{align*}
    \C_{\{1,2\}} &:= \{ \{\widetilde{\PP}\} \times \{(i,j)\} \ | \ (i,j) \in \Delta \}; \\ \C_{\{2\}} &:= \{ \{\widetilde{\PP}\} \times \Delta_1 \times \{j\} \ | \ j \in \Delta_2 \} \\
    \C_{\{1\}} &:= \{ \{\widetilde{\PP}\} \times \{i\} \times \Delta_2 \ | \ i \in \Delta_1 \}
    \end{align*}
    % \begin{itemize}
    % \item $\C_{\{1,2\}} := \{ \{\widetilde{\PP}\} \times \{(i,j)\} \ | \ (i,j) \in \Delta \}$; %which is represented by the set of squares in Figure \ref{fig:poset-con:rect};
    % \item $ \C_{\{2\}} := \{ \{\widetilde{\PP}\} \times \Delta_1 \times \{j\} \ | \ j \in \Delta_2 \}$; %which is represented by the set of large columns in Figure \ref{fig:poset-con:rect}; and
    % \item $ \C_{\{1\}} := \{ \{\widetilde{\PP}\} \times \{i\} \times \Delta_2 \ | \ i \in \Delta_1 \}$. %which is represented by the set of large rows in Figure \ref{fig:poset-con:rect}.
    % \end{itemize}
Each of these partitions is visualised by the diagram on the right hand side of Figure \ref{fig:poset-con:rect}. By  \cite[Theorem 2.5]{SmallEx}, the group $G$  acting on  $\C_{\{1,2\}}$ is permutationally isomorphic to the group $S_{e_1} \times S_{e_2}$ acting on $\Delta$, and the group induced on each $\C_{\{1,2\}}$-class by its setwise stabiliser is permutationally isomorphic to the $\widetilde{G}$-action on $\widetilde{\PP}$.
    \begin{figure}
        \includegraphics[]{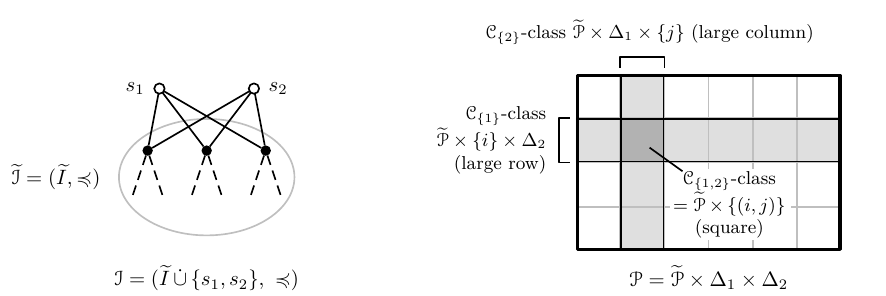}
        \caption{Poset after applying Construction \ref{con:rect-comb}}
        \label{fig:poset-con:rect}
    \end{figure}

Suppose that the group $\widetilde{G}$ in Construction \ref{con:rect-comb} preserves an $\widetilde{\I}$-poset block structure on $\widetilde{\PP}$  corresponding to a poset $\widetilde{\I} = (\widetilde{I},\preccurlyeq)$ as described in Subsection \ref{ss:posetblockstruc}. That is, suppose that $\widetilde{\PP} = \Delta_{\widetilde{I}} = \prod_{i \,\in\, \widetilde{I}} \Delta_i$ and that $\widetilde{G}$ is a subgroup of the generalised wreath product $\prod_{(\widetilde{I},\preccurlyeq)} (\Sym(\Delta_i),\Delta_i)$. Then $\widetilde{G}$ preserves the partitions $\widetilde{\C}_{\widetilde{J}}$ for all ancestral subsets $\widetilde{J} \subseteq \widetilde{I}$. If we visualise the set $\Delta=\Delta_1 \times \Delta_2$  as a rectangular array with $e_1$ rows and $e_2$ columns, the group $S_{e_1} \times S_{e_2}$ preserves three proper partitions of $\Delta$, namely, the partition into singletons, the set of rows, and the set of columns. Each partition $\widetilde{\C}_{\widetilde{J}}$ of $\widetilde{\PP}$ then gives rise to a partition of the set $\PP$; similar to the notation introduced in Remark \ref{rem:poset-con:wr-comb}, we denote this partition by $\widetilde{\C}_{\widetilde{J}} \times \Delta$, where
    \[
    \widetilde{\C}_{\widetilde{J}} \times \Delta = \big\{ \widetilde{C} \times \{(i,j)\} \ \big| \ \widetilde{C} \in \widetilde{\C}_{\widetilde{J}}, \ (i,j) \in \Delta \big\}.
    \]
In particular, the partition $\C_{\{1,2\}}$ above is $\widetilde{\C}_{\varnothing} \times \Delta$. For each ancestral subset $\widetilde{J}$, the group $G$ preserves the partition $\widetilde{\C}_{\widetilde{J}} \times \Delta$, and each    ($\widetilde{\C}_{\widetilde{J}} \times \Delta$)-class is contained in a unique ($\C_{\{1,2\}}$)-class  (represented by a square in the rectangle on the right hand side of Figure \ref{fig:poset-con:rect}). Thus inside each $\C_{\{1,2\}}$-class $\widetilde{\PP} \times \{(i,j)\}$ is a copy of the  poset block structure $(\widetilde{\PP}, \widetilde{\C}^*)$ corresponding to the poset $\widetilde{\I}$ and preserved by the group $\widetilde{G}$. It follows that $G$ preserves a poset block structure corresponding to the poset $\I = (I,\preccurlyeq)$, where: 
\begin{equation} \label{eq:twonodesontop}
I=\widetilde{I} \,\dot\cup\, \{s_1,s_2\},\ \text{the restriction} \ 
    \I|_{\widetilde{I}} = \widetilde{\I},\ 
    i\preccurlyeq s_1 \ \text{and} \ i \preccurlyeq s_2 \ \text{for all} \ i\in \widetilde{I}, s_1 \not\preccurlyeq s_2, s_2 \not\preccurlyeq s_1.      \end{equation}
That is, $\I$ is the poset obtained by adding two independent nodes `on top of' $\widetilde{\I}$, as illustrated in Figure \ref{fig:poset-con:rect}. 
The ancestral subsets in $\I$ are the sets $\varnothing$, $\{s_1\}$, $\{s_2\}$, and $\widetilde{J} \cup \{s_1,s_2\}$, where $\widetilde{J}$ is ancestral in $\widetilde{\I}$.  For each ancestral subset $\widetilde{J}$ in $\widetilde{\I}$, the partition $\widetilde{\C}_{\widetilde{J}} \times \Delta$ is the partition $\C_{\widetilde{J} \,\cup\, \{s_1,s_2\}}$.
\end{remark}

If we use the same parameters for the input design of Construction~\ref{con:rect-comb} as those used in Lemma~\ref{lem:chainontop-p+1} for the input design of Construction~\ref{con:wr-comb}, we see in Remark~\ref{rem:2nodesontop-p+1} that the output design has different, but somewhat similar parameters. 

\begin{remark} \label{rem:2nodesontop-p+1}
Under the hypotheses of Proposition \ref{prop:con-rect-comb}, suppose that the input $1$-design $\widetilde{\D}$ has parameters $\widetilde{k} = p+1$ and $\widetilde{v} = p^2 + p + 1$, for some positive integer $p$. Then $t = 1$, and the parameters of the design $\D$ arising from Construction \ref{con:rect-comb} are 
    \begin{equation} \label{eq:2nodesontop-p+1}
    e_1 = p^2 - p + 1, \ \ 
    e_2 = p^4 - p^2 + 1, \ \ 
    k = p^4 + 1, \ \ 
    v = p^8 + p^4 + 1
    \end{equation}
\end{remark}

In the next two examples we first show how to use Construction~\ref{con:rect-comb} to produce $2$-designs admitting a block-transitive group preserving a poset block structure for the V-shaped poset with three nodes. Then we give in Example~\ref{ex:Y} a recursive process to construct designs corresponding to arbitrarily large Y-shaped posets.  

\begin{example}[Designs corresponding to the V-shaped poset with three nodes] \label{ex:V}
Consider the V-shaped poset $\I = (I,\preccurlyeq)$ with $I = \{1,2,3\}$, such that $1 \prec 3$ and $1 \prec 2$, as shown in Figure \ref{fig:ex-V}.
    \begin{figure}
    \includegraphics{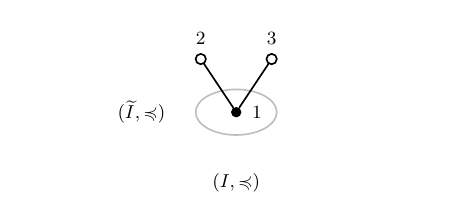}
    \caption{Poset for Example \ref{ex:V}}
    \label{fig:ex-V}
    \end{figure}
Then $I = \widetilde{I} \,\dot\cup\, \{2,3\}$ where $\widetilde{I} = \{1\}$, $\widetilde{I}$ and the pair $\{2,3\}$ satisfy condition \eqref{eq:twonodesontop}, and $(\widetilde{I},\preccurlyeq)$ is a single node.
A $2$-design admitting a block-transitive group preserving a V-shaped poset block structure on points can therefore be obtained from Construction \ref{con:rect-comb} with input any $2$-design $\widetilde{\D}=(\widetilde{\PP},\widetilde{\B})$ such that the parameters $\widetilde{v}$ and $\widetilde{k}$  satisfy condition \eqref{hyp:con-rect}. A family of such designs was given in \cite[Example 5.2]{SmallEx}, and could have been obtained from Construction \ref{con:rect-comb} by taking as $\widetilde{\PP}$ any set of size $p^2+p+1$, and as block-set $\widetilde{\B}$ the set of all images of any $(p+1)$-subset $B\subset \widetilde{\PP}$ under the elements of any $2$-transitive subgroup $\widetilde{G}$ of $\Sym(\widetilde{\PP})$.  (Note that this $\widetilde{\D}$ is the  $2$-design described in Example \ref{ex:chains}.) Then the output $2$-design $\D(\widetilde{\D}, (e_2,e_3))$ on applying Construction~\ref{con:rect-comb} has, by Remark \ref{rem:2nodesontop-p+1}, parameters  $e_2 = p^2 - p + 1$ and $e_3 = p^4 - p^2 + 1$.  (Note our change in notation: we have used $e_i$ for the size of the set $\Delta_i$ corresponding to node $i$ of the poset $\I$, see Figure~\ref{fig:ex-V}.)
\end{example}

\begin{example}[Designs corresponding to a Y-shaped poset] \label{ex:Y}
Consider a Y-shaped poset $\I = (I,\preccurlyeq)$ with $I = \{1,2, \ldots, s\}$ (for some $s \geq 4$), where $\widetilde{I} := \{1, 2, \ldots, s-2\}$ forms a chain with $1 \prec 2 \prec \ldots \prec s-2$, and $i \prec s-1$,\  $i \prec s$ for all $i \in \widetilde{I}$, $s-1 \not\preccurlyeq s$ and $s \not\preccurlyeq s-1$, as shown in Figure \ref{fig:ex-Y}. 
    \begin{figure}
        \centering
        \includegraphics{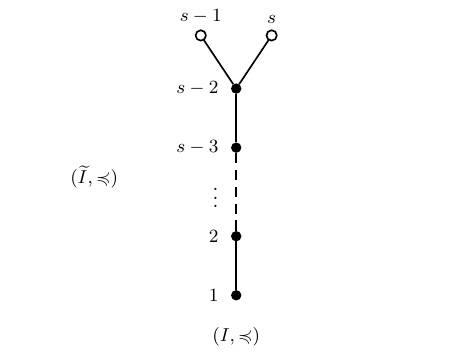}
        \caption{Poset for Example \ref{ex:Y}}
        \label{fig:ex-Y}
    \end{figure}
Then $I = \widetilde{I} \,\dot\cup\, \{s-1, \, s\}$ satisfies the conditions \eqref{eq:twonodesontop}. We can construct (recursively) a $2$-design $\D$ admitting a block-transitive group of automorphisms preserving an $\I$-poset block structure on points as follows: Take $\widetilde{\D}$ to be the design $\D^{s-2}$ in \cite[Construction 4.4]{chainspaper}, which has parameters $\widetilde{v} = p^2 + p + 1$ and $\widetilde{k} = p + 1$ where $p = q^{2^{s-3}}$ for some positive integer $q$. Let $e_{s-1} = p^2 - p + 1$ and $e_s = p^4 - p^2 + 1$ (so that $e_{s-1}$ and $e_s$ are the numbers $e_1$ and $e_2$, respectively, in Remark \ref{rem:2nodesontop-p+1}). Then the design $\D=\D(\widetilde{\D},(e_{s-1},e_s))$ is $\I$-imprimitive and is a $2$-design.  
\end{example}

Finally we observe in Example~\ref{ex:smallposets-2nodesontop} how $2$-designs relative to several other small posets can be constructed using Construction~\ref{con:rect-comb}. 

\begin{example}[Designs corresponding to other posets] \label{ex:smallposets-2nodesontop}
The $2$-designs constructed in \cite[Examples 5.1, 5.2, and 5.4]{SmallEx} and \cite[Construction 7.7]{multigrids} all satisfy condition \eqref{hyp:con-rect} with $t = 1$. Take $\widetilde{\D}$ to be one of these designs and let $\widetilde{\I}$ be the corresponding poset. Then applying Construction \ref{con:rect-comb}  produces a $2$-design admitting a block-transitive group preserving an  $\I$-poset block structure, where $\I$ is one of the posets pictured in Figure \ref{fig:smallposets-2nodesontop}, with the nodes of the poset $\widetilde{\I}$ indicated by the black dots.
\end{example}

\begin{figure}
    \centering
    \includegraphics[width=0.8\linewidth]{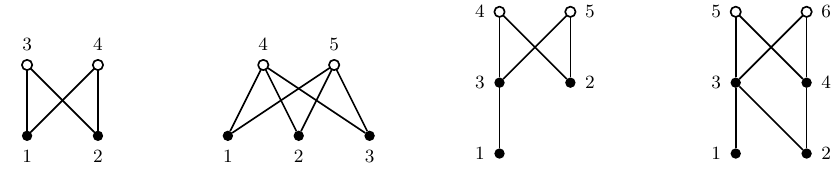}
    \caption{Posets for Example \ref{ex:smallposets-2nodesontop}}
    \label{fig:smallposets-2nodesontop}
\end{figure}

\section{Other posets}\label{s:other}

In this final section we give a collection of new design constructions to illustrate how Constructions~\ref{con:wr-comb} and~\ref{con:rect-comb} can be applied recursively in any order to obtain block-transitive poset-imprimitive $2$-designs for a variety of posets, including those in  Figure~\ref{fig:towers}. This flexibility is available to us because,  in both Constructions \ref{con:wr-comb} and \ref{con:rect-comb}, the parameter $t$  remains constant: that is to say, the integer value of $t$ for the input design is equal to the value of $t$ for the output design. Hence, if the input design $\widetilde{\D}$ for either of these constructions is a $2$-design, then applying any sequence of the two constructions produces another $2$-design with the same $t$ as $\widetilde{\D}$, provided that, at each step of the sequence, if Construction \ref{con:wr-comb} is applied then the parameter $e$ is chosen to satisfy \eqref{hyp:con}, or if Construction \ref{con:rect-comb} is applied then the parameters $e_1$ and $e_2$ are chosen to satisfy \eqref{hyp:con-rect}.

If the starting $2$-design $\widetilde{\D}$ has a block-transitive group preserving a poset block structure corresponding to the poset $\widetilde{\I} = (\widetilde{I},\preccurlyeq)$, then the $2$-design $\D$ obtained at the end of the sequence will admit a block-transitive group preserving a poset bock structure corresponding to a poset $\I = (\widetilde{I} \,\dot\cup\, H, \preccurlyeq)$, for some poset $\mathscr{H} = (H,\preccurlyeq)$ with the following properties:
    \begin{itemize}
        \item $\mathscr{H}$ has exactly one, or exactly two, minimal elements;
        \item $\mathscr{H}$ is built as follows: starting with an initial poset $\widetilde{\mathscr{H}}$ consisting of one node or two independent nodes, we apply a sequence of operations, where each operation either adds one node, or adds two independent nodes, on top of $\widetilde{\mathscr{H}}$.
        \item the restrictions $\I|_{\widetilde{I}} = \widetilde{\I}$ and $\I|_{H} = \H$.
        \item $i\prec j$ for all $i\in \widetilde{I}$ and $j\in H$.
       % } {\color{blue} is it OK to say this here?}
    \end{itemize}
   
Examples of such posets $\mathscr{H}$ can be found in Figure~\ref{fig:towers}. For instance, the poset $\mathscr{H}$ in Figure \ref{fig:towers}~(a) is obtained by starting with an initial poset consisting of  two independent nodes, and recursively adding two independent nodes on top of the current poset $\mathscr{H}$; the poset in Figure \ref{fig:towers} (b) is obtained by starting with an initial poset consisting of one node, and alternately adding two independent nodes, and adding one independent node, on top of the current poset. Procedures to obtain the posets in Figure~\ref{fig:towers}(c) and (d) can be described similarly.

 \begin{figure}
        \centering
        \includegraphics[width=0.7\linewidth]{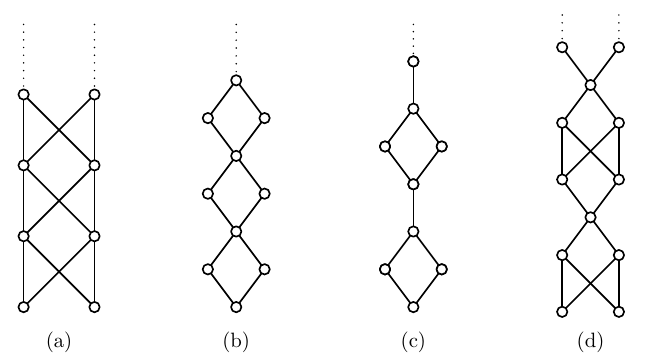}
        \caption{Some posets of arbitrary size that can be obtained by applying a sequence of Constructions \ref{con:wr-comb} and \ref{con:rect-comb}}
        \label{fig:towers}
    \end{figure}

\medskip

In Example~\ref{ex:4nodes}, we summarise what is known about the existence of block-transitive poset-imprimitive $2$-designs for posets with exactly four nodes. For posets with less than four nodes the existence question is completely resolved by  \cite[Theorem 1.3]{SmallEx}.  

\begin{example}[Designs corresponding to posets with four nodes]\label{ex:4nodes}
Figure \ref{fig:4nodes} shows all of the posets $\I$ with four nodes, which can be constructed by recursively adding one node, or two independent nodes, on top of an existing poset (as described above). %{\color{lightgray}apart from the `N-poset'} {\color{lightgray}\cite[Example 5.4]{SmallEx}, the disconnected union of two chains with two nodes each, and the V-shaped poset with one arm of length $2$ and another arm of length $1$ \cite{simplecond} (not sure though if both of these last two examples will be included in that paper)} 
For each of these posets $\I$, an explicit infinite family of block-transitive, $\I$-imprimitive $2$-designs $\D$ can be obtained, as explained above, from an initial $2$-design $\widetilde{\D}$ by applying a sequence of Constructions \ref{con:wr-comb} and \ref{con:rect-comb}. The $2$-design $\widetilde{\D}$ corresponds to the poset $\widetilde{\I}$ which is indicated by the black dots in Figure \ref{fig:4nodes}, and in each case there are possibilities for $\widetilde{\D}$ with the parameter $t$ being an integer, by \cite[Theorem 1.3]{SmallEx} (usually with $t=1$).  
%
% In fact, in each of these constructions we have $t=1$. For each of the posets $\I$ in Figure \ref{fig:4nodes},  the designs  $\D$ can be obtained, as explained above, from an initial design $\widetilde{\D}$ by applying a sequence of Constructions \ref{con:wr-comb} and \ref{con:rect-comb}. The design $\widetilde{\D}$ corresponds to the poset $\widetilde{\I}$ which is indicated by the black dots in Figure \ref{fig:4nodes}.
%, and we can take it to be either a block-transitive, point-primitive $2$-design (in the case of Figure \ref{fig:4nodes} (a), (c), and (f)), or a block-transitive, point-imprimitive $2$-design in the family of designs described in \cite[Construction 7.7]{multigrids} (for Figure \ref{fig:4nodes} (b)), \cite[Example 5.1]{SmallEx} (for Figure \ref{fig:4nodes} (g)), or one of the grid-imprimitive designs listed in Example \ref{ex:V-inv} (for Figure \ref{fig:4nodes} (d) and (e)). We note that an infinite family of examples corresponding to Figure \ref{fig:4nodes} (a) can also be obtained as a special case of {\color{lightgray}\cite[Construction 4.4]{chainspaper} by taking $s = 4$} {\color{magenta}Example \ref{ex:chains} by taking $n = 3$}. {\color{blue} isn't that construction exactly the same as what we are doing here? If so I think we can delete that sentence, or make that clearer.}
% {\color{blue} are there infinite families of examples for all these? In both cases the $t$ of $\D_0$ must be an integer.} {\color{magenta}Yes - for each example we use the usual $e_1, \ldots, e_4$, where the integer $p$ is arbitrary.}
%
In fact we may obtain appropriate choices for $\widetilde{\D}$ (with $t$ an integer) as follows.
\begin{itemize}
\item For Figure \ref{fig:4nodes} (a), (c), and (f): any block-transitive, point-primitive $2$-design, for example those mentioned in Remark \ref{r:con-wr-ex}.  These are the same designs used as input designs in Example \ref{ex:chains}, and indeed Example \ref{ex:chains} for $n=3$ gives the same construction as described here.
\item For Figure \ref{fig:4nodes} (b): any block-transitive, point-imprimitive $2$-design in the family of designs described in \cite[Construction 7.7]{multigrids}.
\item For Figure \ref{fig:4nodes} (g): any block-transitive, point-imprimitive $2$-design in the family of designs described in \cite[Example 5.1]{SmallEx}.
\item For Figure \ref{fig:4nodes} (d) and (e): any block-transitive, grid-imprimitive $2$-design. These are the same $2$-designs described as input designs in  Example \ref{ex:V-inv}.
\end{itemize}

In Figure \ref{fig:4nodes-2}, we list four posets
 with four nodes for which there are known  infinite families of examples, but which cannot be obtained by Constructions \ref{con:wr-comb} or \ref{con:rect-comb}. %{\color{lightgray}These are the N-poset, the V-poset where one arm has length $2$ and the other has length $1$, the disconnected union of two chains with two nodes each, and the disconnected union of a chain with three nodes and one isolated node.} {\color{blue} I think the drawings are clear enough that we don't need this sentence.} 
 A family of examples for the N-poset (Figure \ref{fig:4nodes-2}(h)) can be found in \cite[Example 5.4]{SmallEx}. J.M. Dacaymat has constructed examples for the other three posets as part of his Ph.D. thesis; examples for the two disconnected chains (Figure \ref{fig:4nodes-2} (j)) are included in a manuscript that is currently under preparation \cite{simplecond}.

There are five remaining posets with four nodes, for which, as far as we know, there are no known infinite families of examples of $2$-designs. These are shown in Figure \ref{fig:4nodes-3}. There is, however, a unique block-transitive $2$-design known for the 4-antichain (Figure \ref{fig:4nodes-3} (p)), see \cite[Theorem 1.4 and Example 7.10]{multigrids}. We thus pose the following problem:

\bigskip\noindent
\textbf{Problem.} Find infinite families of examples of block-transitive, point-imprimitive $2$-designs which admit a poset of invariant partitions that is isomorphic to one of the posets in Figure \ref{fig:4nodes-3}.

\end{example}
\begin{figure}
    \centering
    \includegraphics{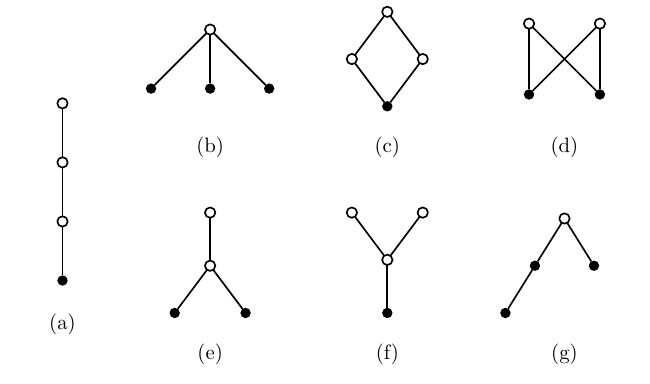}
    \caption{Posets with four nodes corresponding to known infinite families of examples of block-transitive, point-imprimitive $2$-designs that can be obtained using a sequence of Constructions \ref{con:wr-comb} and \ref{con:rect-comb} }
    \label{fig:4nodes}
\end{figure}

\begin{figure}
    \centering
    \includegraphics{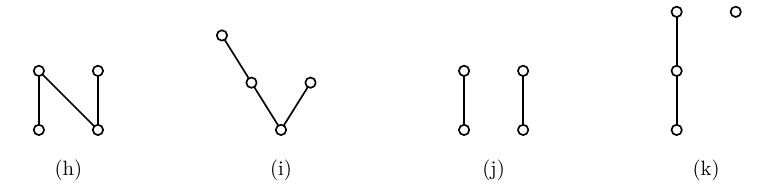}
    \caption{Posets with four nodes corresponding to known infinite families of examples of block-transitive, point-imprimitive $2$-designs that cannot be obtained using a sequence of Constructions \ref{con:wr-comb} and \ref{con:rect-comb}}
    \label{fig:4nodes-2}
\end{figure}

\begin{figure}
    \centering
    \includegraphics{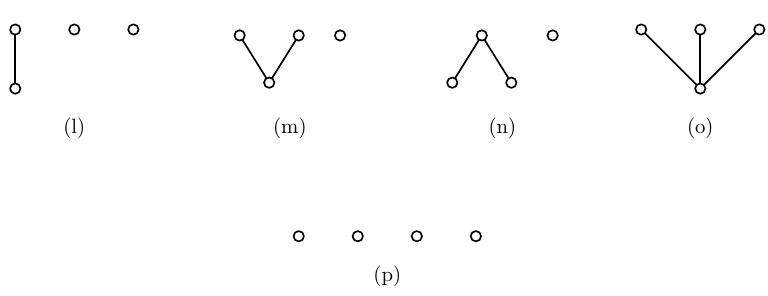}
    \caption{Posets with four nodes with no known corresponding infinite families of examples of block-transitive, point-imprimitive $2$-designs}
    \label{fig:4nodes-3}
\end{figure}

{}

\end{document}